\documentclass{amsart}
\usepackage{amsmath}%
\usepackage{amsthm}
\usepackage{amsfonts}%
\usepackage{amssymb}%
\usepackage{graphicx}
\usepackage{bm}
\usepackage{comment}
\usepackage{mathtools}
\usepackage{bbm}

\usepackage[all,cmtip]{xy}
\usepackage{tikz-cd}
\usetikzlibrary{cd}

\usepackage[utf8]{inputenc}
\usepackage{a4}
\usepackage[hidelinks]{hyperref}
\usepackage{enumitem}

\usepackage{mathrsfs}

\DeclareFontFamily{OT1}{pzc}{}
\DeclareFontShape{OT1}{pzc}{m}{it}{<-> s * [1.0] pzcmi7t}{}
\DeclareMathAlphabet{\mathpzc}{OT1}{pzc}{m}{it}

\usepackage[singlespacing]{setspace}
\newtheorem{theorem}{Theorem}[section]

\newtheorem{corollary}[theorem]{Corollary}

\newtheorem{definition}[theorem]{Definition}

\newtheorem{lemma}[theorem]{Lemma}

\newtheorem{proposition}[theorem]{Proposition}

\numberwithin{equation}{section}
\def\XXint#1#2#3{{\setbox0=\hbox{$#1{#2#3}{\int}$}
\vcenter{\hbox{$#2#3$}}\kern-.5\wd0}}

\newcommand{\R}{\mathbb{R}}

\newcommand{\N}{\mathbb{N}}
\newcommand{\Z}{\mathbb{Z}}

\newcommand{\Ha}{\mathcal{H}}
\newcommand{\leb}{\mathcal{L}}

\newcommand{\spt}{\operatorname{spt}}

\newcommand{\diam}{\operatorname{diam}}

\newcommand{\Lip}{\operatorname{Lip}}

\newcommand{\on}{\:\mbox{\rule{0.1ex}{1.2ex}\rule{1.1ex}{0.1ex}}\:}

\DeclareMathOperator{\bI}{\mathbf{I}}

\DeclareMathOperator{\bM}{\mathbf{M}}
\DeclareMathOperator{\len}{\textup{length}}
\DeclareMathOperator{\bN}{\mathbf{N}}
\DeclareMathOperator{\trace}{\textup{tr}}

\DeclareMathOperator{\area}{\textup{Area}}

\DeclareMathOperator{\apmd}{\operatorname{apmd}}
\DeclareMathOperator{\mass}{\mathbf{M}}

\newcommand{\bb}[1]{\llbracket #1\rrbracket}
\usepackage{stmaryrd}
\newcommand{\norm}[1]{\lVert#1\rVert}

\usepackage[colorinlistoftodos]{todonotes}
\usepackage{import}
\usepackage{xifthen}
\usepackage{pdfpages}
\usepackage{transparent}
\usepackage{xcolor}
\usepackage{stmaryrd}

\makeatletter
\newcommand*{\cone}{%
	{%
		\mathpalette\@coneOf{\times}%
	}%
}
\newcommand*{\@coneOf}[2]{%
	\sbox0{$\m@th#1\mathsf{#2}$}%
	\mathsf{#2}%
	\kern-\wd0 %
	\mkern2.00mu\relax
	\nonscript\mkern0.25mu\relax
	\mathsf{#2}%
}

\usepackage{etoolbox}

\makeatletter
\patchcmd{\@setaddresses}{\indent}{\noindent}{}{}
\patchcmd{\@setaddresses}{\indent}{\noindent}{}{}
\patchcmd{\@setaddresses}{\indent}{\noindent}{}{}
\patchcmd{\@setaddresses}{\indent}{\noindent}{}{}
\makeatother

\keywords{Metric geometry, Metric manifolds, Homology, Integral currents}
\subjclass[2020]{Primary 53C23; Secondary 49Q15, 28A75}
\thanks{D.M. was supported by Swiss National Science Foundation grant 212867.}

\author{Denis Marti}

\address{Department of Mathematics\\ University of Fribourg\\  Chemin du Mus\'ee 23\\  1700 Fribourg, Switzerland}
\email{denis.marti@unifr.ch}

\title[One-dimensional and codimension one homology of metric manifolds]{One-dimensional and codimension one homology of metric manifolds}

\begin{document}

\begin{abstract}
We compare singular homology and homology via integral currents in metric spaces that are homeomorphic to smooth manifolds. For such spaces, we provide sufficient conditions that guarantee the existence of a surjective homomorphism from the codimension one homology group via integral currents to the codimension one singular homology group. Moreover, we show that a one-dimensional isoperimetric inequality for integral currents implies that the one-dimensional homology groups coincide. 
\end{abstract}

\maketitle

% \todo[inline]{add Acknowledgements!!!!!!!!!!!!!!!!!!!!!!!!}

\section{Introduction}
    In their seminal paper \cite{fed-flem}, Federer and Fleming introduced integral currents in Euclidean space and the associated homology groups. Ambrosio and Kirchheim later extended this theory to metric spaces \cite{ambrosio-kirchheim-2000}, making it possible to consider homology groups via integral currents in a more general context. These homology groups have become an important topic in metric geometry; see e.g. \cite{depauw-pfeffer-hardt,quasiflats,song2022spherical,wenger-flat-conv}. They are a useful tool for studying the analytic and geometric structure of metric spaces. For instance, in a closed Lipschitz manifold, the homology groups via integral currents and the singular homology groups coincide. This provides a measure-theoretic and analytic representation of singular homology classes. The aim of this article is to explore the relationship between singular homology and homology via integral currents in more general spaces. More specifically, we consider metric spaces that are homeomorphic to a smooth manifold. Such spaces, known as metric manifolds, have a strong connection between their topological and analytic structures, making them suitable for our purposes. Indeed, in joint work with Basso and Wenger \cite{basso2023geometric}, the author proved that, under mild assumptions, the top-dimensional homology group via integral currents in a metric manifold is infinite cyclic. The generator, called the metric fundamental class, was further studied in \cite{denis-non-orientable,marti2024characterization}.

    \begin{definition}\label{def: metric fundamental class}
        Let $X$ be a metric space with finite Hausdorff $n$-measure that is homeomorphic to a closed, oriented, connected Riemannian $n$-manifold $M$. A metric fundamental class of $X$ is an integral current $T\in \bI_n(X)$ with $\partial T=0$ such that
        \begin{itemize}
            \item[(a)] there exists $C>0$ such that $\norm{T}\leq C \cdot \Ha^n$;
            \item[(b)] $\varphi_\# T = \deg(\varphi) \cdot \bb{M}$ for every Lipschitz map $\varphi \colon X \to M$.
        \end{itemize}
    \end{definition}

    Here, $\bb{M}$ denotes the fundamental class of $M$ represented by an integral current, $\deg(\varphi)$ is the (topological) degree of $\varphi$, and $\norm{T}$ is the mass measure of $T$. It follows from \cite[Proposition 5.5]{basso2023geometric} that if a metric manifold $X$ has a metric fundamental class $T$, then $T$ generates the top-dimensional homology group via integral currents $H_{n}^{IC}(X)$. In particular, $H_{n}^{IC}(X)$ is isomorphic to the top-dimensional singular homology group. We show that the existence of a metric fundamental class also has implications for the codimension one homology group via integral currents.

    \begin{theorem}\label{thme: codimension-one-homology}
        Let $X$ be a metric space with finite Hausdorff $n$-measure that is homeomorphic to a closed, oriented, connected smooth $n$-manifold. Suppose that $X$ has a metric fundamental class. Then, there exists a surjective homomorphism from $H_{n-1}^\textup{IC}(X)$ to $H_{n-1}(X)$.
    \end{theorem}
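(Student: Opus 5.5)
Throughout, $T$ denotes the metric fundamental class of $X$. The plan is to go through Poincaré duality. Since $X$ is homeomorphic to a closed, oriented, connected $n$-manifold, capping with the singular fundamental class gives an isomorphism
\[
H^1(X;\Z)\cong [X,S^1]\longrightarrow H_{n-1}(X).
\]
So every singular class in degree $n-1$ has the form $[X]\frown \varphi^*[d\theta]$ for some continuous map $\varphi\colon X\to S^1$, unique up to homotopy. The analogous construction for currents is the slice $\langle T,\varphi,t\rangle\in \bI_{n-1}(X)$ for a Lipschitz $\varphi\colon X\to S^1$ and a regular value $t\in S^1$.

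We then define $\Phi\colon H^1(X;\Z)\to H_{n-1}^{IC}(X)$ in three steps.
\begin{itemize}
\item[(1)] Every homotopy class in $[X,S^1]$ contains a Lipschitz representative. To see this, approximate $\varphi$ by a Lipschitz map into $\R^2$, uniformly close and hence nonvanishing, and normalize it onto $S^1$; since $S^1$ is a Lipschitz neighbourhood retract this is possible.
\item[(2)] For a.e.\ $t$ the slice $\langle T,\varphi,t\rangle$ is a cycle, because $\partial T=0$. Its homology class does not depend on $t$: the difference of two slices is $\partial (T\on \varphi^{-1}(I))$ for an arc $I$ between the two values.
\item[(3)] Lipschitz homotopic maps produce homologous slices. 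Slice the product $\bb{[0,1]}\times T$ along the homotopy $H$ and use $\partial(\bb{[0,1]}\times T)=\bb{1}\times T-\bb{0}\times T$, then push forward to $X$. Two Lipschitz maps that are continuously homotopic are also Lipschitz homotopic, again by the approximation in (1) applied on $[0,1]\times X$ relative to the ends.
\end{itemize}
Additivity of $\Phi$ should follow from the coarea and slicing formulas for the product map $\varphi\cdot\psi$, or it can be checked after composing with the comparison map below.

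Next we need a map $\Psi\colon H_{n-1}^{IC}(X)\to H_{n-1}(X)$, and we claim $\Psi\circ\Phi$ is the Poincaré duality isomorphism. Then $\Psi$ is surjective and is the required homomorphism. The obstacle is that currents on a non-Lipschitz space have no obvious singular homology class. My first attempt is to define $\Psi$ via duality with $H^1$, using the pairing
\[
\langle S,\psi\rangle := \psi_\#S\in \bI_{n-1}(S^1)\quad\text{(when } n=2\text{)}.
\]
This works only in that case, so in general I would use a different route. By Lemma-type results (e.g.\ from \cite{basso2023geometric}), $X$ is Lipschitz homotopy-equivalent in a weak sense to nerve complexes of fine covers. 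Taking an $\varepsilon$-fine cover, the map $X\to N_\varepsilon$ into the nerve is Lipschitz, and $N_\varepsilon$ is a finite simplicial complex whose Lipschitz-current homology equals its singular homology. Together with the map $N_\varepsilon\to X$ built from a homeomorphism with $M$, this sends a current cycle to a singular class $X\to N_\varepsilon\to X$. For $\varepsilon$ small enough the class is independent of $\varepsilon$, because the composite $X\to N_\varepsilon\to X$ is close to the identity and hence homotopic to it.

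To verify $\Psi\circ\Phi=\mathrm{PD}$, push the slice to the nerve. Slicing commutes with Lipschitz pushforward, so the image of $\langle T,\varphi,t\rangle$ is the slice of $f_\#T$ for the induced map on $N_\varepsilon$. Condition (b) of Definition~\ref{def: metric fundamental class}, transported through the nerve, identifies $f_\#T$ with the image of the fundamental class. The slice of a fundamental class at a regular value represents its cap product with $\varphi^*[d\theta]$, which is the classical statement on polyhedra.

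The main obstacle is this comparison step: making $\Psi$ well defined and showing that $f_\#T$ represents the fundamental class on the nerve. I would try first to use (b) with maps to $M$ via a homeomorphism approximated by Lipschitz maps, together with condition (a), to control masses uniformly in $\varepsilon$.
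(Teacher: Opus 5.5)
Your outline has a genuine gap, and it sits where you say it does. Neither the comparison map $\Psi\colon H_{n-1}^{\textup{IC}}(X)\to H_{n-1}(X)$ nor the identity $\Psi\circ\Phi=\mathrm{PD}$ is established, and the nerve construction does not supply them as described.

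\textbf{The main gap.} Condition (b) only controls pushforwards of $T$ into $M$. Nothing you wrote shows that $g_\#T$, for the Lipschitz nerve map $g\colon X\to N_\varepsilon$, represents $g_*[X]$. To prove it you would have to show that $g$ is Lipschitz homotopic to $h\circ\varphi$, where $\varphi\colon X\to M$ is a Lipschitz approximation of the homeomorphism and $h\colon M\to N_\varepsilon$ is Lipschitz. At that point the nerve does nothing that $M$ does not already do.

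\textbf{Two further unproved steps.} First, a Lipschitz map $X\to\mathbb{S}^1$ need not factor through $g$. So Lemma \ref{lemma: pushfw-slice-is-slice-phsfwd} does not directly turn $\langle T,\varphi,t\rangle$ into a slice on $N_\varepsilon$. Second, you assert that slicing an integral cycle on a polyhedron by a Lipschitz map to $\mathbb{S}^1$ realises the cap product under the isomorphism of Theorem \ref{thme: sing-integral-homology-coincide}. This is not a standard fact in the current setting and would need its own proof.

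\textbf{The missing idea.} Take $M$ itself as the comparison space, and run the slicing from $M$ back to $X$ rather than from $X$ forward.
\begin{enumerate}
\item Fix a Lipschitz $\varphi\colon X\to M$ uniformly close to the degree-one homeomorphism $\varrho$ (Lemma \ref{lemma: lip-approx-easy}). Then $\varphi\simeq\varrho$, so $\deg\varphi=1$, and (b) gives $\varphi_\#T=\bb{M}$.
\item The homomorphism is simply $(\varrho^{-1})_*\circ\Theta\circ\varphi_*$, where $\Theta\colon H_{n-1}^{\textup{IC}}(M)\to H_{n-1}(M)$ is the isomorphism of Theorem \ref{thme: sing-integral-homology-coincide}. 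Well-definedness is automatic.
\item For surjectivity, take a primitive generator of $H_{n-1}^{\textup{IC}}(M)$. By Lemma \ref{lemma: topog-repr-homo-class-map-into-S1} there is a smooth $f\colon M\to\mathbb{S}^1$ with $\langle\bb{M},f,p\rangle=\bb{N_p}$ representing it for a.e.\ $p$.
\item The map $f\circ\varphi$ factors through $\varphi$ by construction. So Lemma \ref{lemma: pushfw-slice-is-slice-phsfwd} gives $\varphi_\#\langle T,f\circ\varphi,p\rangle=\langle\bb{M},f,p\rangle$, and this slice of $T$ is a preimage of the generator.
\end{enumerate}

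This makes the following parts of your plan unnecessary: steps (1)--(3), the independence of $t$, homotopy invariance of slices, additivity of $\Phi$, and any use of condition (a). You only need one preimage per generator, not a map $\Phi$ defined on all of $H^1(X;\mathbb{Z})$.
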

    
    Here, $H_{n-1}^{IC}(X)$ and $H_{n-1}(X)$ denote the \((n-1)\)th homology group via integral currents and the \((n-1)\)th singular homology group of $X$, respectively. We refer to Section \ref{sec: homology} for the precise definitions. The following type of metric manifolds satisfies the conditions of Theorem \ref{thme: codimension-one-homology}. By \cite[Theorem 1.3]{basso2023geometric}, every metric surface ($n=2$) has a metric fundamental class. In addition, any metric manifold that is the Gromov–Hausdorff limit of Lipschitz manifolds with bounded volume has a metric fundamental class \cite[Theorem 1.2]{marti2024characterization}. Finally, \cite[Theorem 1.1]{basso2023geometric} implies that linearly locally contractible metric manifolds support a metric fundamental class. 
    %The following types of metric manifolds have a metric fundamental class: metric surfaces ($n=2)$ by \cite[Theorem 1.3]{basso2023geometric}, Gromov-Hausdorff limits of Lipschitz manifolds with bounded volume \cite[Theorem 1.2]{marti2024characterization} and metric manifolds that are linearly locally contractible \cite[Theorem 1.1]{basso2023geometric}
    A metric space is said to be linearly locally contractible if there exists $\lambda>0$ such that every ball of radius  $0<r<\diam( X)/ \lambda$ is contractible within the ball with the same center and radius $\lambda r$. We note that, in general, the homomorphism in Theorem \ref{thme: codimension-one-homology} is not an isomorphism. Indeed, in Section \ref{sec: examples}, we construct two examples of geodesic metric spaces with finite Hausdorff $2$-measure that are homeomorphic to the two-sphere, but their $1$-dimensional homology groups via integral currents are not trivial. The first example is Ahlfors $2$-regular but not linearly locally contractible. On the other hand, the second example is linearly locally contractible but not Ahlfors $2$-regular. We call a metric space $X$ Ahlfors $2$-regular if there exists a constant $c>0$ such that for every ball $B(x,r)$ in $X$, we have
    $$c^{-1}r^2 \leq \Ha^2(B(x,r))\leq cr^2.$$ 
    Both examples in Section \ref{sec: examples} have a metric fundamental class. Therefore, for a large group of metric surfaces, the one-dimensional homology group via integral currents is strictly larger than the one-dimensional singular homology group. However, in certain situations, they coincide.

    \begin{theorem}\label{thme: one-homology}
        Let $X$ be a quasiconvex metric space that is homeomorphic to a closed, connected smooth manifold. If $X$ satisfies a $1$-dimensional small mass isoperimetric inequality for integral currents, then $H_1^\textup{IC}(X)$ and $H_1(X)$ are isomorphic.
    \end{theorem}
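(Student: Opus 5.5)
We will construct a homomorphism $\Phi\colon H_1(X)\to H_1^{\textup{IC}}(X)$ and show that it is bijective. Since $X$ is homeomorphic to a closed manifold, $H_1(X)$ is the abelianization of $\pi_1(X)$. Every class is therefore represented by a loop, and because $X$ is compact and locally contractible it is represented by a loop that is a concatenation of short geodesic-like pieces. Fix a scale $\varepsilon>0$ below the threshold of the small mass isoperimetric inequality, and below a scale at which every ball of radius $\lambda\varepsilon$ is contained in a contractible set; the latter exists by compactness and local contractibility. Given a continuous loop $\gamma$, choose a fine partition $t_0<\dots<t_k$ and join consecutive points $\gamma(t_i),\gamma(t_{i+1})$ by curves of length at most $L\, d(\gamma(t_i),\gamma(t_{i+1}))$, using quasiconvexity. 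The resulting Lipschitz loop $\hat\gamma$ is homotopic to $\gamma$, since each short piece together with the original arc lies in a small ball in which it is null-homotopic. Define $\Phi[\gamma]=[\hat\gamma_\#\bb{0,1}]$. More intrinsically, one can define $\Phi$ on singular $1$-cycles by applying this Lipschitz approximation to every singular simplex, with the endpoints kept fixed.

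The first main step is well-definedness. Two Lipschitz approximations of homotopic loops, or of homologous $1$-cycles, differ by the boundary of a singular $2$-chain. Subdivide this $2$-chain barycentrically until every simplex has image of diameter less than $\delta$. Each small $2$-simplex then has a Lipschitz boundary triangle $\sigma$, built from the approximated edges, with $\bM(\partial\text{-triangle})\le 3L\delta$. Since $\sigma$ is a closed current of small mass, the small mass isoperimetric inequality gives $S\in\bI_2(X)$ with $\partial S$ equal to the triangle current. Summing these fillings over the subdivision shows that the two approximations are homologous as integral currents, because interior edges cancel. This produces a homomorphism $\Phi\colon H_1(X)\to H_1^{\textup{IC}}(X)$.

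Surjectivity uses the structure of integral $1$-cycles. Every $T\in\bI_1(X)$ with $\partial T=0$ decomposes as a countable sum of closed Lipschitz curves $\gamma_i$ with $\sum_i\len(\gamma_i)\le\bM(T)$. Only finitely many curves have length at least $\varepsilon$. Call the remaining ones short loops; their total mass can be made arbitrarily small. Each short loop is filled by the isoperimetric inequality. Moreover, the sum of all the short loops is itself a cycle of small mass, so it is null-homologous in $H_1^{\textup{IC}}$. Hence $[T]$ equals the sum of finitely many Lipschitz loop classes, each of which is $\Phi$ of its singular class.

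Injectivity is the main obstacle. Suppose $\Phi[c]=0$, so $\hat c=\partial S$ for some $S\in\bI_2(X)$. We must show that $c$ is singularly null-homologous, which means turning an integral $2$-current into a singular $2$-chain. First, I would use the isoperimetric inequality at all small scales, via a Federer–Fleming or Wenger-type deformation argument, to cut $S$ along Lipschitz slices of distance functions from a finite $\varepsilon$-net. This decomposes $\partial S$ into a sum of closed Lipschitz curves of length less than $C\varepsilon$, in the sense of a current identity $\hat c=\sum_j\partial S_j$ in which each $\partial S_j$ is a short Lipschitz loop up to small-mass corrections. Concretely, the slices $\langle S,d_{x_i},r_i\rangle$ are $1$-currents whose boundaries lie in $\partial S$ after suitable choices of $r_i$, and they form a cellular-type subdivision. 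The boundary of each piece is a short closed curve contained in a small ball, and it is null-homotopic in the contractible neighbourhood given by local contractibility. Assembling these pieces shows that $c$ is a sum of singular boundaries plus null-homotopic loops. Technically, the hardest part is to make the slice curves genuine closed curves connecting points of $\hat c$ consistently, in the manner of a discrete Hurewicz argument. For this I would use a nerve of the cover by small balls and map the cutting pattern to a simplicial $2$-chain in the nerve, whose boundary is the nerve image of $c$. Pulling back through a partition-of-unity map from the nerve to $X$ then yields the required singular $2$-chain.
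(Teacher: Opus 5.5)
\paragraph{Where your argument stands.} Your architecture is the paper's with the arrow reversed. The paper defines $\Theta\colon H_1^{\textup{IC}}(X)\to H_1(X)$. Your well-definedness of $\Phi$ (subdivide a singular $2$-chain, replace edges by short Lipschitz curves, fill each small triangle by the isoperimetric inequality) is the paper's injectivity of $\Theta$. Your surjectivity of $\Phi$ (decompose into loops, fill the short tail) is the paper's finite-loop representation lemma. Those parts are fine modulo bookkeeping: edges not lying on the given Lipschitz cycles should be replaced, consistently, by a single quasi-geodesic between their endpoints, so that the bound $3L\delta$ holds.

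\paragraph{The gap.} It is in your injectivity step: showing that a Lipschitz cycle $\hat c=\partial S$ with $S\in\bI_2(X)$ is singularly null-homologous. This is the content of the paper's well-definedness lemma, and your sketch does not prove it.
\begin{itemize}
\item Cutting $S$ by slices of distance functions gives no control on the pieces. By coarea you can only bound $\bM\langle S,d_x,t\rangle$ for some $t\in(r,2r)$ by $\bM(S\on B(x,2r))/r$. An arbitrary filling $S$ may concentrate mass in small balls, so the $\partial S_j$ need not be short. The isoperimetric inequality bounds fillings, not pieces of a given filling, so it cannot help.
\item More fundamentally, the slices are merely integral $1$-currents, possibly countable sums of arcs. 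You leave open exactly how to turn them into singular $1$-chains consistently across adjacent pieces, which is the entire difficulty.
\item The nerve remark does not fill this in. The partition-of-unity map goes from $X$ to the nerve, not back. ``Mapping the cutting pattern to a simplicial $2$-chain'' is never defined.
\end{itemize}

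\paragraph{The missing idea.} This step needs no isoperimetric inequality at all: factor through a space where the two homologies are known to agree. The paper does this in three moves.
\begin{itemize}
\item Take a Lipschitz map $\varphi\colon X\to M$ uniformly close to the homeomorphism $\varrho$ (Lemma~\ref{lemma: lip-approx-easy}), so that $\psi=\varrho^{-1}\circ\varphi$ is homotopic to $\mathrm{id}_X$.
\item Then $\varphi_\# S$ fills the Lipschitz cycle $\varphi_\#\hat c$ in $M$. By Theorem~\ref{thme: sing-integral-homology-coincide} applied to $M$, $\varphi_\#\hat c$ is singularly null-homologous.
\item Hence $[c]=[\hat c]=[\psi_\#\hat c]=\varrho^{-1}_*[\varphi_\#\hat c]=0$ in $H_1(X)$.
\end{itemize}
Your nerve idea can be rescued in the same spirit. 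Push $S$ forward by the partition-of-unity map $\Psi\colon X\to|N|$, and use that integral-current and singular homology agree on the finite polyhedron $|N|$. Then return via a map $g\colon |N|\to X$, built from local contractibility, with $g\circ\Psi\simeq\mathrm{id}_X$. This involves no slicing or cutting pattern, and the approximation of $\varrho$ already does the same job with $M$ in place of $|N|$.
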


    A metric space $X$ is said to be quasiconvex if there exists a constant $c>0$ such that for all $x,y\in X$, there exists a curve $\gamma$ from $x$ to $y$ with $\len(\gamma) \leq cd(x,y)$. The definition of the isoperimetric inequality for integral currents is presented in Section \ref{see: isoper}. There, we also introduce the local quadratic isoperimetric inequality for Lipschitz curves. This type of isoperimetric inequality plays an important role in analysis in metric spaces; see e.g. \cite{Lytchack-wenger-Canonical-parameterizations, Lytchak-wenger-robert, meier2025monotone}. It is well-known that a metric surface that is Ahlfors $2$-regular and linearly locally contractible satisfies a local quadratic isoperimetric inequality for Lipschitz curves. Since a local quadratic isoperimetric inequality for Lipschitz curves implies a $1$-dimensional small mass isoperimetric inequality for integral currents (Proposition \ref{prop: sobolev-isop-implies-current-isoper}), we obtain the following corollary.
    
    %We give the definition of the isoperimetric inequality for integral currents in Section \ref{see: isoper}. There, we also introduce the local quadratic isoperimetric inequality for Lipschitz curves. This type of isoperimetric inequality plays an important role in analysis in metric spaces; see e.g. \cite{Lytchack-wenger-Canonical-parameterizations, Lytchak-wenger-robert, meier2025monotone}. It is well-known that a metric surface that is Ahlfors $2$-regular and linearly locally contractible satisfies a local quadratic isoperimetric inequality for Lipschitz curves. Since a local quadratic isoperimetric inequality for Lipschitz curves implies a $1$-dimensional small mass isoperimetric inequality for integral currents (Proposition \ref{prop: sobolev-isop-implies-current-isoper}), we obtain the following corollary.

    \begin{corollary}\label{cor: one-dim-homology-surf}
        Let $X$ be a metric space homeomorphic to a closed, oriented, connected smooth surface. Suppose that $X$ is Ahlfors $2$-regular and linearly locally contractible. Then, $H_k^\textup{IC}(X)$ and $H_k(X)$ are isomorphic for every $k \geq 0$.
    \end{corollary}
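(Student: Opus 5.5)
We prove Corollary \ref{cor: one-dim-homology-surf} by treating the degrees $k$ separately, using the two theorems stated above for the middle degrees.

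\textbf{Degrees $k\geq 3$ and $k=0$.} For $k\geq 3$, both groups vanish. Singular homology vanishes because $X$ is a $2$-manifold. For currents, $X$ has Hausdorff dimension $2$: Ahlfors $2$-regularity and compactness give $\Ha^2(X)<\infty$, so $\Ha^k(X)=0$ for $k\ge 3$. Integral $k$-currents are concentrated on countably $\Ha^k$-rectifiable sets, so $\bI_k(X)=0$ for $k\geq 3$. For $k=0$, $H_0^{IC}(X)\cong\Z$ because $X$ is connected and rectifiably path connected. Indeed, a $0$-cycle with integer weights summing to zero is the boundary of a sum of Lipschitz curves, and quasiconvexity (see below) supplies these curves. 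Hence $H_0^{IC}(X)\cong\Z\cong H_0(X)$. We should double-check how the paper normalizes $H_0^{IC}$; possibly it treats $\bI_0$ with the convention $\partial=0$, which is what we have assumed here.

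\textbf{Degree $k=1$.} We apply Theorem \ref{thme: one-homology}. Two hypotheses must be checked.

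\emph{Quasiconvexity.} It is standard that an Ahlfors $2$-regular, linearly locally contractible metric surface is quasiconvex; this is a result in the spirit of Semmes, or of Heinonen–Koskela via Loewner-type estimates. Since $X$ is doubling and linearly locally connected, one can also argue directly. Chain connectivity at every scale gives, for $x,y$ at distance $r$, a chain of points inside $B(x,\lambda r)$. Ahlfors regularity bounds the number of disjoint small balls, and an iterative refinement of the chain yields a curve of length $\lesssim r$. This refinement is the delicate point, and we would rather cite the literature than redo it.

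\emph{Isoperimetric inequality.} The introduction recalls that Ahlfors $2$-regular, linearly locally contractible surfaces satisfy a local quadratic isoperimetric inequality for Lipschitz curves. Proposition \ref{prop: sobolev-isop-implies-current-isoper} converts this into a $1$-dimensional small mass isoperimetric inequality for integral currents.

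Theorem \ref{thme: one-homology} then gives $H_1^{IC}(X)\cong H_1(X)$.

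\textbf{Degree $k=2$.} By \cite[Theorem 1.3]{basso2023geometric}, $X$ has a metric fundamental class; this also follows from \cite[Theorem 1.1]{basso2023geometric}, since $X$ is linearly locally contractible and $\Ha^2(X)<\infty$. By \cite[Proposition 5.5]{basso2023geometric}, this class generates $H_2^{IC}(X)$, which is infinite cyclic. Since $X$ is closed, oriented and connected, $H_2^{IC}(X)\cong\Z\cong H_2(X)$.

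The main obstacle is the degree $1$ case, specifically confirming quasiconvexity from the stated hypotheses. Everything else reduces directly to earlier results.
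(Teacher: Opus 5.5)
Your proposal is correct and follows the paper's proof degree by degree: $\bI_k(X)=0$ for $k>2$, quasiconvexity for $k=0$ (the paper's complex ends at $\bI_0(X)$, so $H_0^\textup{IC}(X)=\bI_0(X)/\partial\bI_1(X)$, which is the convention you assumed), Theorem~\ref{thme: one-homology} together with Theorem~\ref{thme: small-mass-isop-ahlfors-llc} and Proposition~\ref{prop: sobolev-isop-implies-current-isoper} for $k=1$, and the metric fundamental class from \cite[Theorem 1.3]{basso2023geometric} for $k=2$. For quasiconvexity the paper also just cites the literature: Semmes' weak $1$-Poincar\'e inequality \cite{semmes-1996} and \cite[Theorem 8.3.2]{Heinonen-Koskela-Shanmugalingam-Tyson-2015}; you are right not to rely on your chain-refinement sketch, because ball-counting plus linear local contractibility cannot by themselves produce rectifiable curves (the snowflaked sphere $(\mathbb{S}^2,d^{1/2})$ is Ahlfors $4$-regular and linearly locally contractible but has no nonconstant rectifiable curves).
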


    The examples in Section \ref{sec: examples} show that all the assumptions in the corollary are necessary to obtain an isomorphism between $H_1^\textup{IC}(X)$ and $H_1(X)$.

    \medskip

    The article is structured as follows. In Section \ref{sec:prelis}, we fix the notation and present the theory of metric currents, homology theory, and isoperimetric inequalities. We also define the slice operator $T \mapsto \langle T , g, p\rangle$ for Lipschitz maps $g\colon X \to \mathbb{S}^1$ in this section. In the next section, we prove Theorem \ref{thme: codimension-one-homology}. The proof is based on the following classical fact. Let $M$ be a closed, oriented, connected Riemannian $n$-manifold. Given a primitive homology class $[\eta] \in H_{n-1}(M)$, there exists a smooth map $f \colon M \to \mathbb{S}^1$ such that for each regular value $p \in \mathbb{S}^1$, the preimage $N_p = f^{-1}(p)$ is a smooth submanifold that represents $[\eta]$. Using that $H_{n-1}^\textup{IC}(M)$ and $H_{n-1}(M)$ are isomorphic, we can prove the analogous statement for elements in $H_{n-1}^\textup{IC}(M)$. Now, let $X$ be a metric space homeomorphic to $M$ and suppose that $X$ has a metric fundamental class $T \in \bI_n(X)$. We approximate the homeomorphism $\varrho\colon X \to M$ by a Lipschitz map $\varphi \colon X \to M$ and consider the slices $\langle T, f\circ \varphi, p\rangle$ and $\langle\bb{M}, f,p\rangle$ for $p \in \mathbb{S}^1$. Recall that $\bb{M}$ denotes the fundamental class of $M$ represented by an integral current. It follows that for almost all $p \in \mathbb{S}^1$, the slice $\langle\bb{M}, f,p\rangle$ is equal to the integral current $\bb{N_p}$ induced by the smooth submanifold $N_p$. Since the pushforward and the slice operator commute and $T$ is a metric fundamental class of $X$, we have
    $$\varphi_\#\langle T,f \circ \varphi, p\rangle = \langle\varphi_\#T,f,p\rangle = \langle \bb{M},f,p\rangle = \bb{N_p}$$
    for almost all $p \in \mathbb{S}^1$. Therefore, for almost all $p\in \mathbb{S}^1$, the slice $\langle T,f \circ\varphi , p\rangle$ induces a homology class $[\xi]\in H_{n-1}^\textup{IC}(X)$ that is sent to $[\eta]$ by the homomorphism $H_{n-1}^\textup{IC}(X)\to H_{n-1}^\textup{IC}(M)$ induced by $\varphi$. We repeat this for each generator of $H_{n-1}^\textup{IC}(M)$ and obtain a surjective homomorphism $H_{n-1}^\textup{IC}(X)\to H_{n-1}^\textup{IC}(M)$. Finally, we note that $H_{n-1}^\textup{IC}(M)$ and $ H_{n-1}(M)$ as well as $H_{n-1}(M)$ and $H_{n-1}(X)$ are isomorphic, respectively. This concludes the proof of Theorem \ref{thme: codimension-one-homology}.   
    The proof of Theorem \ref{thme: one-homology} is contained in Section \ref{sec: one-dime}. First, we show that any homology class $[\eta] \in H_1^\textup{IC}(X)$ can be represented by an integral $1$-current induced by finitely many Lipschitz loops. This follows from a decomposition result for integral $1$-currents and the $1$-dimensional isoperimetric inequality. A finite collection of Lipschitz loops induces a class in $H_1(X)$. We define a homomorphism $\Theta \colon H_1^\textup{IC}(X) \to H_1(X)$ by sending a class $[\eta]\in H_1^\textup{IC}(X)$ to the singular homology class induced by a representation of $[\eta]$ by finitely many Lipschitz loops. It is not difficult to prove that two different representations of $[\eta]$ induce the same homology class in $H_1(X)$. Since $X$ is geodesic, a straightforward argument shows that $\Theta$ is surjective. Injectivity is more complicated. Let $[\eta] \in H_1^\textup{IC}(X)$ be such that $\Theta([\eta]) = 0$. Then, there exists a representative $T\in \bI_1(X) $ of $[\eta]$ given by finitely many Lipschitz loops and a singular $2$-chain $c$ that bounds $T$. We use $c$ and the isoperimetric inequality to construct an integral $2$-current $V \in \bI_2(X)$ that satisfies $\partial V = T$. Thus, $[\eta]  = 0$, which shows that $\Theta$ is injective. Finally, in Section \ref{sec: examples}, we construct the two examples mentioned in the introduction. 

\section{Preliminaries}\label{sec:prelis}
    \subsection{Metric notions}
        Let $(X,d)$ be a metric space. We write $B(x,r)$ for the open ball with center $x \in X$ and radius $r> 0$. For $A \subset X$ and $r>0$, we denote the open $r$-neighborhood of $A$ by 
        $$N_r^X(A) = \left\{x \in X \colon d(A,x) < r\right\}.$$
        If the ambient space $X$ is clear from the context, we simply write $N_r(A)$. A map $f\colon X \to Y$ between metric spaces is said to be \textit{\(L\)-Lipschitz} if $d(f(x),f(y)) \leq L d(x,y)$ for all $x,y \in X$. The smallest $L\geq 0$ satisfying the previous inequality is called the Lipschitz constant of $f$ and is denoted by $\Lip(f)$. If $f$ is injective and the inverse $f^{-1}$ is $L$-Lipschitz as well, we say $f$ is \textit{$L$-bi-Lipschitz}. We let $\Lip(X)$ be the set of all Lipschitz functions from $X$ to $\R$. The Hausdorff $k$-measure on $X$ is denoted by $\Ha^k$. Given two maps $f, g\colon X \to Y$, we define the uniform distance between \(f\) and \(g\) by
        $$ d(f, g)=\sup_{x\in X} d(f(x), g(x)).$$
        We need the following Lipschitz approximation result.
        
        \begin{lemma}\label{lemma: lip-approx-easy}
            Let \(f\colon X \to M\) be a continuous map from a compact metric space \(X\) to a closed Riemannian manifold $M$. Then, for every \(\varepsilon>0\), there exists a Lipschitz map \(g \colon X \to M\) with \(d(f, g)<\varepsilon\). 
        \end{lemma}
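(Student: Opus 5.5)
The plan is to embed $M$ isometrically in Euclidean space and use a nearest-point retraction. By the Nash embedding theorem, or simply by Whitney's embedding together with the observation that any smooth embedding of a compact manifold is bi-Lipschitz onto its image, we may regard $M$ as a compact smooth submanifold of some $\R^N$. The intrinsic Riemannian distance $d_M$ and the Euclidean distance are then bi-Lipschitz equivalent on $M$, say $d_M(x,y)\le L_0|x-y|$. Since $M$ is compact, it has a tubular neighborhood $U=N^{\R^N}_\delta(M)$ for some $\delta>0$. On $U$ the nearest-point projection $\pi\colon U\to M$ is well defined and smooth. After shrinking $\delta$, it is Lipschitz on $U$ with some constant $L_1$, measured into $(M,d_M)$.

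Next, approximate the coordinate map $f\colon X\to M\subset\R^N$ by a Lipschitz map into $\R^N$. Each coordinate $f_i\colon X\to\R$ is continuous on the compact space $X$, hence uniformly continuous. The standard inf-convolution
$$h_{i,k}(x)=\inf_{y\in X}\bigl(f_i(y)+k\,d(x,y)\bigr)$$
is $k$-Lipschitz and converges uniformly to $f_i$ as $k\to\infty$. Alternatively, use a partition of unity subordinate to a finite cover of $X$ by small balls, built from Lipschitz bump functions. Either way we obtain a Lipschitz $h\colon X\to\R^N$ with $\sup_x|h(x)-f(x)|<\eta$ for any prescribed $\eta>0$.

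Choose $\eta<\delta$, so that $h(X)\subset U$, and set $g=\pi\circ h$. This map is Lipschitz into $M$ as a composition of Lipschitz maps. For the uniform estimate, use that $\pi(f(x))=f(x)$ and that $\pi$ is $L_1$-Lipschitz on $U$:
$$d_M(g(x),f(x))=d_M\bigl(\pi(h(x)),\pi(f(x))\bigr)\le L_1|h(x)-f(x)|<L_1\eta.$$
Taking $\eta<\min(\delta,\varepsilon/L_1)$ gives $d(f,g)<\varepsilon$.

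The only point needing care is that $\pi$ is globally Lipschitz on $U$ with respect to $d_M$. It is smooth on the closure of a slightly smaller tubular neighborhood, which is compact, so its differential is bounded there. That neighborhood is not convex, however, so a bound on the differential does not by itself give a Lipschitz bound. We handle this in two cases. If $|p-q|<\delta$, the segment $[p,q]$ lies in the $2\delta$-neighborhood of $M$, where $\pi$ is still smooth after shrinking $\delta$, so integrating the differential along the segment gives the Lipschitz bound. If $|p-q|\ge\delta$, we use the boundedness of $\diam_{d_M}(M)$ to get $d_M(\pi(p),\pi(q))\le(\diam M/\delta)\,|p-q|$.
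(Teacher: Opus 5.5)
Your proof is correct, and it is essentially the argument behind the paper's one-line justification. The paper cites the general approximation lemma of Basso--Marti--Wenger for maps into absolute Lipschitz neighborhood retracts, together with Hohti's theorem that closed Riemannian manifolds are such retracts; that lemma works in the same way as yours, by approximating with Lipschitz maps in an ambient linear space and then composing with a Lipschitz neighborhood retraction. The only difference is that you make it self-contained: you build the retraction explicitly as the nearest-point projection of a tubular neighborhood in $\R^N$, and your two-case argument correctly handles the non-convexity of that neighborhood, instead of invoking Hohti's result.
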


        The lemma is a particular instance of \cite[Lemma 2.1]{basso2023geometric} since every closed Riemannian manifold is an absolute Lipschitz neighborhood retract \cite[Theorem~3.1]{hohti-1993}. We will use the previous lemma frequently in the following context. Let $\varrho\colon X \to M$ be a homeomorphism, where $X$ is a metric space and $M$ is a closed Riemannian manifold. By Lemma \ref{lemma: lip-approx-easy}, there exist Lipschitz maps $X \to M$ that are arbitrarily close to $\varrho$. If we choose the Lipschitz map $\varphi\colon X \to M$ sufficiently close to $\varrho$, then $\varphi$ is homotopic to $\varrho$, and the composition $\varrho^{-1}\circ \varphi$ is homotopic to the identity on $X$. Indeed, embed $M$ in $l^\infty$, and let $U \subset l^\infty$ be an open neighborhood of $M$ such that there exists a retraction $\pi \colon U \to M$. Whenever $d(\varphi,\varrho)$ is sufficiently small, the straight-line homotopy $H$ between $\varphi$ and $\varrho$ stays within $U$. Hence, $\pi \circ H$ is well-defined and is a homotopy between $\varphi$ and $\varrho$ as maps from $X$ to $M$. Since $X$ is a topological manifold, it is an absolute neighborhood retract. Thus, an analogous argument shows that $\varrho^{-1}\circ \varphi$ is homotopic to the identity on $X$, in case $\varphi\colon X \to M$ is sufficiently close to $\varrho$.

\begin{comment}       
        \todo[inline]{add comment that close maps in manifolds are homotopic}
        A metric space \(Y\) is an \textit{absolute Lipschitz neighborhood retract} if there exists \(C>0\) such that whenever \(Y\subset Z\) for some metric space \(Z\), then there exists an open neighborhood $U\subset Z$ of \(Y\) and a \(C\)-Lipschitz retraction \(R\colon U \to Y\). Important examples of absolute Lipschitz neighborhood retracts are closed Riemannian manifolds, see e.g. \cite[Theorem~3.1]{hohti-1993}. In particular, using this fact and Lemma \ref{lemma: lip-approx-easy}, it is not difficult to show that the following holds. If $M$ is a closed Riemannian manifold and $\varepsilon>0$, there exists $\delta>0$ such that whenever $f,g\colon X \to M$ are Lipschitz maps from a metric space $X$ into $M$ with $d(f,g) < \delta$, then there exists a Lipschitz homotopy $H:f\simeq g$ satisfying $d(H(x,t),H(x,s)) \leq \varepsilon$ for every $x \in X$ and $t$.
\end{comment}

    \subsection{Metric currents}\label{sec: currents}
        We provide a brief introduction to the theory of currents in metric spaces and record some facts we will need later. For more information, we refer to \cite{ambrosio-kirchheim-2000} and \cite{lang-local}. Let $X$ be a complete metric space and $k \geq 0$ be an integer. Let  $\mathcal{D}^k(X) = \Lip_b(X) \times \Lip(X)^k$, where $\Lip_b(X)$ denotes the space of bounded Lipschitz functions $X \to \R$. A multilinear functional $T$ on $\mathcal{D}^k(X)$ is said to be a metric $k$-current if it satisfies a certain continuity, locality, and finite mass property. For each $k$-current $T$, there exists an associated finite Borel measure $\norm{T}$, called the mass measure of $T$. This measure can be thought of as the volume measure on the generalized surface $T$. We write $\bM_k(X)$ for the space of metric $k$-currents on $X$. Let $T \in \bM_k(X)$. We denote by $\bM(T) = \norm{T}(X)$ the mass of $T$. There exists a unique extension of $T$ to $L^1(X,\norm{T})\times\Lip(X)^k$. For a Borel set $B \subset X$, we define the restriction $T\on B \in \bM_k(X)$ by
        $$T\on B (f,\pi_1,\dots,\pi_k) = T(\mathbbm{1}_B f,\pi_1,\dots,\pi_k)$$
        for all $(f,\pi_1,\dots,\pi_{k})\in \mathcal{D}^{k}(X)$.
        The boundary of $T$ is given by
        $$\partial T (f,\pi_1,\dots,\pi_{k-1}) = T(1,f,\pi_1,\dots,\pi_{k-1})$$
        for all $(f,\pi_1,\dots,\pi_{k-1})\in \mathcal{D}^{k-1}(X)$. If the boundary $\partial T$ is a $(k-1)$-current, then we call $T$ a normal $k$-current. The space of normal $k$-currents in $X$ is denoted by $\bN_k(X)$. We define the normal mass by $\bN(T) = \bM(T) + \bM(\partial T)$. The space $\bN_k(X)$ equipped with the normal mass $\bN$ is a Banach space. Given a Lipschitz map $\varphi\colon X \to Y$, the pushforward of $T$ under $\varphi$ is the metric $k$-current in $Y$ defined as
        $$\varphi_\# T(f,\pi_1,\dots,\pi_k) = T(f\circ \varphi,\pi_1\circ \varphi,\dots,\pi_k\circ\varphi)$$
        for all $(f,\pi_1,\dots, \pi_{k})\in \mathcal{D}^k(Y)$. In Euclidean space, every function $\theta \in L^1(\R^k)$ induces a metric $k$-current as follows
        $$\bb{\theta}(f,\pi_1,\dots,\pi_k) = \int_{\R^k} \theta f \det(D \pi) \; d\leb^k$$
        for all $(f,\pi) = (f,\pi_1,\dots,\pi_k) \in \mathcal{D}^k(\R^k)$.  A normal $k$-current $T \in \bN_k(X)$ is called an integral $k$-current if there exist countably many compact sets $K_i \subset \R^k$ and $\theta_i \in L^1(\R^k,\Z)$ with $\spt \theta_i \subset K_i$ and bi-Lipschitz maps $\varphi_i \colon K_i \to X$ such that
        $$T=\sum_{i\in \N} \varphi_{i\#}\bb{\theta_i} \quad \text{ and } \quad \mass(T)=\sum_{i\in \N} \mass\big(\varphi_{i\#}\bb{\theta_i}\big).$$
        We denote by $\bI_k(X)$ the space of integral $k$-currents in $X$. An integral $k$-current $T$ is called an integral $k$-cycle if it has zero boundary $\partial T = 0$. Important examples of integral cycles are currents induced by Riemannian manifolds. Let $M$ be a closed, oriented Riemannian $k$-manifold. Then 
        \[
        \bb{M}(f,\pi) = \int_Mf\det(D\pi)\,d\hspace{-0.14em}\Ha^k
        \]
        for all \((f, \pi)\in \mathcal{D}^k(M)\), defines an integral $k$-current on $M$. Stokes' theorem implies that $\bb{M}$ is a cycle. In case $M$ is connected, the integral cycle $\bb{M}$ generates $H_k^\textup{IC}(M)$ and is called the fundamental class of $M$.

        \medskip

        Next, we explain the slicing of a normal current. This is an important technique that allows us to represent a current with lower dimensional pieces of the current. Let $T \in \bN_k(X)$ and let $f\colon X \to \R$ be Lipschitz. For almost all $t \in \R$, the slice $\langle T, f, t\rangle$ of $T$ by $f$ at $t$ is a normal $(k-1)$-current characterized by the following property:
        \begin{equation}\label{eq: charact-property-slice}
            \int_\R \langle T, f, t\rangle \psi(t) \; dt = T\on(\psi \circ f)df
        \end{equation}
        for all $\psi \in C_c(\R)$; see \cite[Theorem 5.6]{ambrosio-kirchheim-2000}. Here, $ T\on(\psi \circ f)df$ is the $(k-1)$-current defined by 
        $$T\on(\psi \circ f)df(g,\pi) = T((\psi\circ f) g, f, \pi)$$
        for all $(g,\pi) \in \mathcal{D}^{k-1}(X)$. Using the characterizing property of the slice, we show that slicing commutes with the pushforward by a Lipschitz map.

        \begin{lemma}\label{lemma: pushfw-slice-is-slice-phsfwd-normal}
            Let $\varphi \colon X \to Y$ be a Lipschitz map between two complete metric spaces. Furthermore, let $T \in \bN_k(X)$ and $f\colon Y\to\R$ be Lipschitz. Then 
            $$\varphi_\#\langle T, f\circ \varphi, t \rangle = \langle \varphi_\# T, f, t \rangle$$
            for almost all $t \in \R$.
        \end{lemma}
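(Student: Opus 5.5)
We will derive the identity from the characterizing property \eqref{eq: charact-property-slice}, applied once in $X$ and once in $Y$, and then pass from an integral identity over $\R$ to an identity for almost every $t$.

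First we check that both sides make sense. Since $\varphi$ is Lipschitz, $\varphi_\# T \in \bN_k(Y)$ and $\partial(\varphi_\#T) = \varphi_\#\partial T$. The function $f\circ\varphi$ is Lipschitz on $X$. Hence both slices exist and are normal currents for almost every $t$. Moreover, $t \mapsto \varphi_\#\langle T, f\circ\varphi, t\rangle$ is a measurable family of normal $(k-1)$-currents in $Y$, with $\int \bN(\cdot)\,dt<\infty$, since pushforward multiplies masses by at most $\Lip(\varphi)^{k-1}$ and $\Lip(\varphi)^{k-2}$.

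Next, fix $\psi \in C_c(\R)$ and $(g,\pi)\in\mathcal{D}^{k-1}(Y)$. By the definition of pushforward and \eqref{eq: charact-property-slice} in $X$,
\[
\int_\R \varphi_\#\langle T,f\circ\varphi,t\rangle(g,\pi)\,\psi(t)\,dt = \int_\R \langle T,f\circ\varphi,t\rangle(g\circ\varphi,\pi\circ\varphi)\,\psi(t)\,dt = T\big((\psi\circ f\circ\varphi)(g\circ\varphi), f\circ\varphi,\pi\circ\varphi\big).
\]
The last expression equals $\varphi_\#T\big((\psi\circ f)g, f,\pi\big) = (\varphi_\#T)\on(\psi\circ f)df\,(g,\pi)$. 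By \eqref{eq: charact-property-slice} in $Y$, this is $\int_\R \langle\varphi_\#T,f,t\rangle(g,\pi)\psi(t)\,dt$.

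The main obstacle is passing from this integral identity to equality for almost every $t$, because the null set a priori depends on $(g,\pi)$. Since $\psi\in C_c(\R)$ is arbitrary, we do get $\varphi_\#\langle T,f\circ\varphi,t\rangle(g,\pi) = \langle\varphi_\#T,f,t\rangle(g,\pi)$ for a.e.\ $t$, for each fixed $(g,\pi)$. To make the null set uniform, there are two routes.

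\begin{itemize}
\item \textbf{Uniqueness.} Slices are characterized uniquely up to null sets by \eqref{eq: charact-property-slice} among integrable families of normal currents; see \cite[Theorem 5.6]{ambrosio-kirchheim-2000}. By the previous paragraph, the family $t\mapsto\varphi_\#\langle T,f\circ\varphi,t\rangle$ satisfies the characterizing identity for $\varphi_\#T$ and $f$. The claim then follows from this uniqueness.
\item \textbf{Countable reduction.} Alternatively, we can argue directly. The supports of $\norm{T}$ and of the slices are $\sigma$-compact, hence separable, so we may replace $Y$ by a separable closed subset containing the relevant supports. On such a set we choose a countable family of tuples $(g,\pi)$ that is dense in the sense of the continuity axiom: pointwise convergence with uniformly bounded Lipschitz constants. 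Applying the continuity of both currents along approximating sequences then extends the a.e.\ equality from the countable family to all $(g,\pi)$ simultaneously.
\end{itemize}
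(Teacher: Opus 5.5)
Your proof is correct and follows the paper's argument: both apply the characterizing identity \eqref{eq: charact-property-slice} once in $X$ for $f\circ\varphi$ and once in $Y$ for $f$, and compare the resulting integrals against $\psi\in C_c(\R)$. Your extra step is a real gain in rigor, because the paper's appeal to the fundamental lemma of the calculus of variations gives a null set that depends on $(h,\pi)$ and then stops; either of your remedies closes this. The first is uniqueness of slices. The second is a countable reduction on one separable set that carries $\norm{\langle T,f\circ\varphi,t\rangle}$ and $\norm{\langle\varphi_\#T,f,t\rangle}$ for a.e.\ $t$ simultaneously, which follows from $\int\norm{\langle S,u,t\rangle}\,dt=\norm{S\on du}$ applied to $S=T$ and $S=\varphi_\#T$.
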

    
        \begin{proof}
            Set $g = f\circ \varphi$. Fix $(h,\pi) \in \mathcal{D}^{k-1}(Y)$ and $\psi \in C_c(\R)$ for the moment. Then
            $$\varphi_\#(T\on (\psi \circ g)dg)(h,\pi)= T\on (\psi\circ g)dg(h\circ \varphi,\pi\circ\varphi).$$
            Thus, by the characterizing property of the slice \eqref{eq: charact-property-slice}, we have
            \begin{align*}
                \varphi_\#(T\on (\psi \circ g)dg)(h,\pi) &=\int_{\R} \langle T, g, t \rangle(h\circ \varphi,\pi\circ\varphi) \; \psi(t) \; dt
                \\
                &= \int_{\R}\varphi_\#\langle T, g, t \rangle(h,\pi) \; \psi(t) \; dt.
            \end{align*}
            On the other hand,
            $$\varphi_\#(T\on (\psi \circ g)dg)(h,\pi) = T(((\psi\circ f)h)\circ \varphi, f \circ \varphi, \pi \circ \varphi)= (\varphi_\#T)\on(\psi\circ f)df (h,\pi).$$
            Therefore, by applying the characterizing property of the slice to $\varphi_\#T$ this time, we get
            \begin{equation*}\label{eq: slice-pushfwd-2}
                (\varphi_\#(T\on (\psi \circ g)dg))(h,\pi) = \int_{\R} \langle \varphi_\# T, f, t\rangle(h,\pi)\; \psi(t) \; dt.
            \end{equation*}
            We conclude that 
            $$\int_{\R}\varphi_\#\langle T, g, t \rangle(h,\pi) \; \psi(t) \; dt = \int_{\R} \langle \varphi_\# T, f, t\rangle(h,\pi)\; \psi(t) \; dt$$
            for all $(h,\pi) \in \mathcal{D}^{k-1}(Y)$ and every $\psi \in C_c(\R)$. The functions $t \mapsto \varphi_\#\langle T, g, t \rangle(h,\pi)$ and $t \mapsto \langle \varphi_\#T, f, t \rangle(h,\pi)$ are $\leb^1$-integrable for all $(h,\pi) \in \mathcal{D}^{k-1}(Y)$. Hence, the fundamental lemma of calculus of variations implies that for almost all $t \in \R$
            $$\varphi_\#\langle T, g, t \rangle(h,\pi) = \langle \varphi_\#T, f, t \rangle(h,\pi).$$
            This completes the proof.
        \end{proof}

        \medskip
        If $T$ is an integral current and $f \colon X \to \R$ is Lipschitz, then for almost all $t \in \R$, the slice $\langle T,f,t\rangle$ is an integral current as well \cite[Theorem 5.7]{ambrosio-kirchheim-2000}. In this case, we have an explicit representation of the slice. We first introduce some terminology and refer to \cite[Section 9]{ambrosio-kirchheim-2000} and \cite{ambrosio-kirchheim-rect} for more information on the concepts discussed below. Suppose that $X$ is separable and embed $X$ into $l^\infty$. If $E \subset l^\infty$ is $k$-rectifiable, then $E$ has an approximate tangent space $\textup{Tan}(E,x)$ at almost all $x\in E$. The approximate tangent space is a $k$-dimensional linear subspace of $l^\infty$. An orientation $\tau$ of $E$ is a choice of unit simple $k$-vectors $\tau=\tau_1\wedge\dots\wedge\tau_k$ such that the $\tau_i \colon E \to l^\infty$ are Borel functions that span the approximate tangent space $\textup{Tan}(E,x)$ for almost every $x\in E$. If $\pi\colon l^\infty\to \R$ is Lipschitz, then $\pi$ is tangentially differentiable almost everywhere on $E$. The tangential differential of $\pi$ at $x$ is a linear map
        $$d_x^E\pi \colon \textup{Tan}(E,x) \to \R$$
        that converges to the difference quotient of $\pi$ at $x$ in a suitable sense. For $\pi = (\pi_1,\dots,\pi_k) \colon l^\infty\to \R^k$ Lipschitz, we let $\bigwedge_kd_x^E \pi$ be the simple $k$-covector induced by the tangential differentials of the components of $\pi$. Let $T\in \bI_k(X)$. It follows from \cite[Theorem 9.1]{ambrosio-kirchheim-2000} that there exist a $k$-rectifiable set $E \subset X\subset l^\infty$ with finite Hausdorff $k$-measure, a Borel function $\theta\colon E \to \N$, and an orientation $\tau$ of $E$ such that for all $(g,\pi) \in \mathcal{D}^{k}(l^\infty)$
        $$T(g,\pi) = \int_E g(x) \theta(x) \; \left\langle \bigwedge\nolimits_{k} d^E_x \pi, \tau(x)\right\rangle \; d\Ha^k(x),$$
        where $\left\langle \bigwedge\nolimits_{k} d^E_x \pi, \tau(x)\right\rangle$ denotes the standard duality pairing between $k$-vectors and $k$-covectors. We write $T= [E,\theta,\tau]$. This representation can be seen as a measure theoretic version of a current $\bb{M}$ induced by a closed, oriented Riemannian manifold $M$. Indeed, we have $\bb{M}= [M,\theta,\tau]$ with $\theta = 1$ and $\tau$ is the usual orientation of $M$. 

\medskip

        With this representation of an integral current $T\in  \bI_k(X)$, we can now give an explicit description of the slices of $T$. Let $f\colon E \to \R$ be Lipschitz. Then, for almost all $t\in \R$, the preimage $E_t = E\cap f^{-1}(t)$ is $(k-1)$-rectifiable and $\textup{Tan}(E_t,x) = \ker(d_x^Ef)$ for $\Ha^{k-1}$-almost all $x \in E_t$. By \cite[Theorem 9.1]{ambrosio-kirchheim-2000}, for almost all $t\in \R$ and $\Ha^{k-1}$-almost every $x \in E_t$, we can write $\tau(x) = \xi(x)\wedge \tau_t(x)$ such that $\tau_t$ is an orientation of $E_t$ and 
        \begin{equation}\label{eq: repres-slice}
            \langle T, f,t \rangle = [ E_t, \theta, \tau_t]
        \end{equation}
        The orientation  $\tau_t$ is characterized by the following equality
        \begin{equation}\label{eq: slice-represent-1}
            \left\langle \bigwedge\nolimits_{k-1} d_x^{E_t}  \pi, \tau_t(x) \right\rangle \frac{\mathbf{J}_k(d_x^E(\pi,f))}{\mathbf{J}_{k-1}(d_x^{E_t}\pi)} = \left\langle \bigwedge\nolimits_{k} d^E_x( \pi,f), \tau(x)\right\rangle 
        \end{equation}
        for $\Ha^{k-1}$-almost all $x\in E_t$ and every Lipschitz map $\pi\colon E \to \R^{k-1}$. Here, the Jacobian is equal to 
        $$\mathbf{J}_k(d_x^E(\pi,f))= \left|\left\langle \bigwedge\nolimits_{k} d^E_x( \pi,f), \tau(x)\right\rangle \right|.$$
        %for almost all $t\in \R$ and $\Ha^{k-1}$-almost all $x \in E_t$. 
        A similar equality is true for $\mathbf{J}_{k-1}(d_x^{E_t}\pi)$, but we will not need it here.
        
        \medskip
        
        The next goal is to define the slice for a cycle $T\in \bI_k(X)$ with respect to a Lipschitz map $g\colon X\to \mathbb{S}^1$. Let $p \in \mathbb{S}^1$ and let $\varphi\colon X \to \R,\; x \to d_{\mathbb{S}^1}(g(x),p)$. We have 
        $$\langle T, \varphi,t\rangle =  \partial(T\on\{\varphi<t\}) = \partial(T \on g^{-1}(B(p,t)))$$
        and 
        $$\spt\left( \langle T, \varphi,t\rangle \right) \subset g^{-1}(\{\varphi = t\})$$
        for almost all $t \in \R$; see \cite[Theorem 5.6]{ambrosio-kirchheim-2000}. We conclude that, for a given differentiable chart $\psi$ of $\mathbb{S}^1$, we can find an open set $U\subset \mathbb{S}^1$ in the domain of $\psi$ with the following properties: $\partial U $ has zero $\Ha^1$-measure, $T \on V \in \bI_k(X)$ for $V=g^{-1}(U)$ and $\spt(\partial (T\on V)) \subset g^{-1}(\partial U)$. We fix an orientation of $\mathbb{S}^1$ and for $p\in \mathbb{S}^1$, we define
        \begin{equation}\label{eq: def-slice-S1}
            \langle T,g,p\rangle = \langle T\on g^{-1}(U),\psi\circ g,\psi(p)\rangle
        \end{equation}
        whenever the right hand side makes sense and $\psi\colon U\subset \mathbb{S}^1\to \R$ is any positively oriented chart as above. Using \cite[Theorem 5.6]{ambrosio-kirchheim-2000}, we get
        $$\spt (\partial \langle T,g,p\rangle) = \spt  (\langle \partial (T\on V),\psi\circ g,\psi(p)\rangle)  \subset g^{-1}(\partial U) \cap g^{-1}(p) $$
        for almost all $p \in U$ and in this case, $\langle T,g,p\rangle$ is a cycle. We claim that the definition does not depend on the choice of the chart. Letting $T = [E,\theta,\tau]$, then \eqref{eq: slice-represent-1} gives that for almost all $p \in U$
        $$\langle T,g,p\rangle = \langle T\on V,\psi\circ g,\psi(p)\rangle = [E\cap V \cap g^{-1}(p),\theta,\tau_p] = [E_p,\theta,\tau_p],$$
        where $E_p = E\cap g^{-1}(p)$ and $V=g^{-1}(U)$. In particular, $E_p$ does not depend on $\psi$. It remains to show that the orientation $\tau_p$ is also independent of $\psi$. Let $\varphi\colon U \to \R$ be another positively oriented chart. A direct computation using the chain rule (which also holds for the tangential differential) shows that for every Lipschitz map $\pi\colon E \to \R^{k-1}$, almost all $p\in \mathbb{S}^1$ and $\Ha^{k-1}$-almost all $x \in E_p$
        $$\left\langle \bigwedge\nolimits_{k} d_x^{E}  (\pi, \psi\circ g), \tau \right\rangle = d_{\varphi(g(x))}(\psi\circ \varphi^{-1})  \left\langle \bigwedge\nolimits_{k} d_x^{E}  (\pi, \varphi\circ g), \tau \right\rangle  $$
        as well as
        $$\mathbf{J}_k(d_x^E (\pi,\psi\circ g)) = |d_{\varphi(g(x))}(\psi\circ \varphi^{-1})| \mathbf{J}_k(d_x^E (\pi,\varphi\circ g)).$$
        Since $\psi$ and $\varphi$ are positively oriented, we have $d_{\varphi(g(x))}(\psi\circ \varphi^{-1})>0$ for every $x\in V$. Therefore, \eqref{eq: slice-represent-1} implies that the orientation $\tau_p$ does not depend on the chart $\psi$. Hence, for almost all $p \in \mathbb{S}^1$, the slice $\langle T,g,p\rangle$ is a well-defined integral $(k-1)$-cycle and is independent of the chart used in \eqref{eq: def-slice-S1}. Notice that the ambient structure of $l^\infty$ and the representation $T=[E,\theta,\tau]$ are only used to show that the slice $\langle T,g,p\rangle$ is well-defined for almost all $p \in \mathbb{S}^1$. Moreover, by \cite[Theorem 9.6]{ambrosio-kirchheim-2000}, the definition of the slice is intrinsic in the sense that it does not depend on the isometric embedding of $X$ into $l^\infty$. Thus, we use the slice for Lipschitz maps $g\colon X \to \mathbb{S}^1$ in separable metric spaces without embedding them into $l^\infty$. We conclude the section with the following lemma.

        \begin{lemma}\label{lemma: pushfw-slice-is-slice-phsfwd}
            Let $\varphi \colon X \to Y$ be a Lipschitz map between two separable metric spaces. Furthermore, let $T \in \bI_k(X)$ be a cycle and $g\colon Y\to\mathbb{S}^1$ be Lipschitz. Then 
            $$\varphi_\#\langle T, g\circ \varphi, p \rangle = \langle \varphi_\# T, g, p \rangle$$
            for almost all $p \in \mathbb{S}^1$.
        \end{lemma}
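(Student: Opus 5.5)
The plan is to reduce the statement to Lemma \ref{lemma: pushfw-slice-is-slice-phsfwd-normal} by localizing to chart domains of $\mathbb{S}^1$. Since $X$ and $Y$ are only separable, I first pass to completions. $\varphi$ extends to a Lipschitz map between the completions, $g$ extends to the completion of $Y$, and currents on a separable space can be regarded as currents on its completion. So I may assume that $X$ and $Y$ are complete, as Lemma \ref{lemma: pushfw-slice-is-slice-phsfwd-normal} requires. Note that $\varphi_\# T$ is again an integral cycle, because pushforward commutes with $\partial$. Hence both sides of the claimed identity are defined for almost every $p$.

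Next I cover $\mathbb{S}^1$ by finitely many positively oriented charts $\psi_j\colon U_j\to\R$. I choose the open sets $U_j\subset\mathbb{S}^1$ to be admissible simultaneously for $T$ with respect to $g\circ\varphi$ and for $\varphi_\# T$ with respect to $g$, in the sense of the discussion preceding \eqref{eq: def-slice-S1}. Admissibility requires that $\partial U_j$ is $\Ha^1$-null, that $T\on (g\circ\varphi)^{-1}(U_j)$ and $\varphi_\#T\on g^{-1}(U_j)$ are integral, and that the support conditions hold. Both requirements hold for arcs $U=B(p_0,t)$ with $t$ outside a null set, and the union of two null sets is null, so such arcs exist. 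The key identity is
$$\varphi_\#\bigl(T\on (g\circ\varphi)^{-1}(U)\bigr) = (\varphi_\# T)\on g^{-1}(U),$$
which follows directly from the definitions: $\mathbbm{1}_{g^{-1}(U)}\circ\varphi = \mathbbm{1}_{(g\circ\varphi)^{-1}(U)}$. Here one uses the extension of $T$ to $L^1(\norm{T})$ test functions, together with $\norm{\varphi_\#T}\le \Lip(\varphi)^k\varphi_\#\norm{T}$ to justify evaluating the pushforward on bounded Borel functions.

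With $U=U_j$ and $\psi=\psi_j$, set $T_U = T\on (g\circ\varphi)^{-1}(U)\in\bI_k(X)\subset \bN_k(X)$ and $f=\psi\circ g$, which is Lipschitz on $g^{-1}(U)$. Then for almost all $p\in U$,
$$\varphi_\#\langle T,g\circ\varphi,p\rangle=\varphi_\#\langle T_U,f\circ\varphi,\psi(p)\rangle=\langle\varphi_\#T_U,f,\psi(p)\rangle=\langle (\varphi_\#T)\on g^{-1}(U),\psi\circ g,\psi(p)\rangle=\langle\varphi_\#T,g,p\rangle.$$
The second equality is Lemma \ref{lemma: pushfw-slice-is-slice-phsfwd-normal}, and the first and last equalities are the definition \eqref{eq: def-slice-S1}. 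Since $\psi$ is a diffeomorphism onto its image, the $\leb^1$-null exceptional set in $\R$ corresponds to an $\Ha^1$-null set of $p$. The finite union over $j$ then gives the claim for almost all $p\in\mathbb{S}^1$.

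The main obstacle is that $f=\psi\circ g$ is only defined and Lipschitz on $g^{-1}(U)$, not on all of $Y$, whereas Lemma \ref{lemma: pushfw-slice-is-slice-phsfwd-normal} needs a globally Lipschitz real function. I would handle this by shrinking to a compactly contained subarc $U'\Subset U$ that is still admissible. On $g^{-1}(\overline{U'})$ the map $\psi\circ g$ is Lipschitz, and I extend it by McShane to a Lipschitz $\tilde f\colon Y\to\R$. The current $T_{U'}$ is concentrated on $(g\circ\varphi)^{-1}(U')$, so by locality its slices and the characterizing identity \eqref{eq: charact-property-slice} depend only on $\tilde f$ restricted to that set, and similarly for $\varphi_\# T_{U'}$. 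Choosing the subarcs $U'$ so that they still cover $\mathbb{S}^1$ completes the argument.
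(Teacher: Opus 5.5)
Your proposal is correct and follows the paper's proof. Both localize to a positively oriented chart $\psi\colon U\to\R$, verify $\varphi_\#\bigl(T\on (g\circ\varphi)^{-1}(U)\bigr)=(\varphi_\#T)\on g^{-1}(U)$ via $\mathbbm{1}_{g^{-1}(U)}\circ\varphi=\mathbbm{1}_{(g\circ\varphi)^{-1}(U)}$, and then apply Lemma \ref{lemma: pushfw-slice-is-slice-phsfwd-normal}. Your extra care (passing to completions, using a McShane extension of $\psi\circ g$ on a shrunken arc together with locality, and covering $\mathbb{S}^1$ by finitely many charts) is sound and fills in points the paper leaves implicit; the separate admissibility check for $\varphi_\#T$ is redundant, since it follows from that same identity once $T\on(g\circ\varphi)^{-1}(U)$ is integral.
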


        \begin{proof}
            Let $\psi\colon U \subset \mathbb{S}^1\to \R$ be a positively oriented chart such that $\partial U$ has zero $\Ha^1$-measure, $T\on W\in \bI_k(X)$ and $(\varphi_\#T)\on V\in \bI_k(Y)$ for $W= (g\circ\varphi)^{-1}(U)$ and $V = g^{-1}(U)$, respectively. Recall that by \cite[Theorem 5.6]{ambrosio-kirchheim-2000} we can find, for each chart $\psi$, an open set in its domain that has these properties. It follows from the definition \eqref{eq: def-slice-S1} and Lemma \ref{lemma: pushfw-slice-is-slice-phsfwd-normal} that
            $$\varphi_\#\langle T,g\circ\varphi,p\rangle = \varphi_\#\langle T\on W,\psi\circ g\circ\varphi,\psi(p)\rangle = \langle \varphi_\#(T\on W),\psi\circ g,\psi(p)\rangle$$
            for almost all $p \in \mathbb{S}^{1}$. On the other hand,
            $$\langle \varphi_\# T , g,p\rangle = \langle (\varphi_\#T)\on V,\psi\circ g,\psi(p)\rangle$$
            for almost all $p \in \mathbb{S}^1$. For $(f,\pi)\in \mathcal{D}^k(Y)$ we have
            $$(\varphi_\#T)\on V(f,\pi) = T((\mathbbm{1}_V\circ\varphi ) (f\circ\varphi),\pi\circ\varphi) = T(\mathbbm{1}_{\varphi^{-1}(V)} f\circ\varphi,\pi\circ\varphi)$$
            and
            $$ \varphi_\#(T\on W)(f,\pi)= T(\mathbbm{1}_{W} f\circ\varphi,\pi\circ\varphi).$$
            Since $W = \varphi^{-1}(V)$ we have $(\varphi_\#T)\on V=\varphi_\#(T\on W)$ and in particular,
            $$\varphi_\#\langle T, g\circ \varphi, p \rangle = \langle \varphi_\# T, g, p \rangle$$
            for almost all $p \in \mathbb{S}^1$. This completes the proof.
        \end{proof}

    \subsection{Homology}\label{sec: homology}
        Next, we discuss singular homology and homology groups via integral currents. We refer to \cite{Hatcher} and \cite{Mitsuishi} for more details. Let $X$ be a complete metric space and let $k\geq0$ be an integer. The standard $k$-dimensional Euclidean simplex is denoted by $\Delta^k$. We call a continuous map $\varphi \colon \Delta^k \to X$ a singular $k$-simplex. A singular $k$-chain $c$ is a formal sum
        $$c = \sum_{i=1}^N n_i \cdot \varphi_i,$$
        where each $\varphi_i$ is a singular $k$-simplex and $n_i \in \Z$. The boundary of $c$ is the $(k-1)$-chain $bc$ defined as
        $$bc = \sum_{i=1}^N n_i \cdot b\varphi_i,$$
        where $b\varphi_i = \sum_j (-1)^j \varphi_{i,j}$ and the $\varphi_{i,j}$ are $\varphi_i$ restricted to the different boundary components of $\Delta^k$. At this point we fixed an enumeration of the boundary components for each $\Delta^k$. We say $c$ is a cycle if $bc = 0$. The space of singular $k$-chains in $X$ is denoted by $C_k(X)$. It follows that these groups, together with the boundary operator, form a chain complex. We write $H_k(X)$ for the $k$th homology group of this chain complex and call it the $k$th singular homology group of $X$. Two singular $k$-cycles $c$ and $c'$ that represent the same class in $H_k(X)$ are said to be homologous, that is, there exists $v\in C_{k+1}(X)$ such that $bv= c-c'$. Let $f\colon X \to Y$ be continuous and let $c= \sum_{i=1}^N n_i \cdot \varphi_i$ be a singular $k$-chain. The pushforward $f_\#c$ of $c$ by $f$ is the singular $k$-chain in $Y$ given by 
        $$f_\#c = \sum_{i=1}^N n_i \cdot (f\circ \varphi_i).$$
        It is not difficult to see that this induces a homomorphism at the level of homology $f_*\colon H_k(X) \to H_k(Y)$. In particular, $f_*[c] = [f_\#c]$ for all $c\in C_k(X)$, where $[c]$ and $ [f_\#c]$ are the homology classes of $c$ and $f_\#c$, respectively. If two continuous maps $f,g \colon X\to Y$ are homotopic, then they induce the same homomorphism $H_k(X)\to H_k(Y)$. 
        
        \medskip

        Given $T\in \bI_k(X)$, the boundary-rectifiability theorem of Ambrosio and Kirchheim \cite[Theorem 8.6]{ambrosio-kirchheim-2000} implies that $\partial T\in \bI_{k-1}(X)$. Therefore, the boundary operator $\partial$ for integral currents maps $\bI_{k}(X)$ into $\bI_{k-1}(X)$ and in particular, 
        $$
        \dotsm \overset{\partial_{k+1}}{\longrightarrow} \bI_k(X) \overset{\partial_{k}}{\longrightarrow} \bI_{k-1}(X) \overset{\partial_{k-1}}{\longrightarrow} \dotsm \overset{\partial_1}{\longrightarrow} \bI_0(X) 
        $$
        is a chain complex. We denote by $H_k^\textup{IC}(X)$ the $k$th homology group of this chain complex and call $H_k^\textup{IC}(X)$ the $k$th homology group via integral currents. Two cycles $S,T \in \bI_{k}(X)$ have the same homology class $[T] = [S]$ if and only if there exists $V\in \bI_{k+1}(X)$ with $\partial V = S-T$. Let $f\colon X \to Y$ be Lipschitz. The boundary operator and pushforward commute, that is, $\partial (f_\# T) = f_\# (\partial T)$ for every $T\in \bI_k(X)$. Hence, $f$ induces a homomorphism $f_*\colon H_k^\textup{IC}(X) \to H_k^\textup{IC}(Y)$. As in the case of singular homology groups, two Lipschitz maps $f,g\colon X \to Y$ induce the same homomorphism $H_k^\textup{IC}(X) \to H_k^\textup{IC}(Y)$ if there exists a Lipschitz homotopy between $f$ and $g$.

        \medskip
        
        A singular $k$-chain $c$ is called a singular Lipschitz $k$-chain if each singular simplex of $c$ is a Lipschitz map. The space of singular Lipschitz $k$-chains in $X$ is denoted by $C_k^\textup{Lip}(X)$. The boundary of a singular Lipschitz chain is again a Lipschitz chain. Thus, we can define the singular Lipschitz homology groups $H_k^\textup{Lip}(X)$ via these Lipschitz chains. Notice that every singular Lipschitz $k$-chain induces an integral $k$-current as well as a singular $k$-chain. It follows that the inclusions $C_k^\textup{Lip}(X)\hookrightarrow \bI_k(X)$ and $C_k^\textup{Lip}(X)\hookrightarrow C_k(X)$, respectively, are chain maps. It is well known that if the underlying space is locally Lipschitz contractible, then these inclusion maps induce isomorphisms on the level of homology; see \cite{Mitsuishi,riedweg2009singular}.

        \begin{theorem}\label{thme: sing-integral-homology-coincide}
            Let $X$ be a compact metric space. Suppose that there exist $R, \lambda>0$ such that every ball $B$ in $X$ of radius $0<r<R$ is contractible via a Lipschitz map $\varphi\colon [0,1] \times B\to X$ satisfying
                $$d(\varphi(s,x),\varphi(t,y)) \leq \lambda r |s-t| + \lambda d(x,y)$$
            for all $s,t \in [0,1]$ and every $x,y\in B$. Then, 
            $$H_k^\textup{IC}(X) \cong H_k^\textup{Lip}(X) \cong H_k(X)$$
            for all $k \in \N$. The isomorphisms are induced by the inclusions $C_k^\textup{Lip}(X)\hookrightarrow \bI_k(X)$ and $C_k^\textup{Lip}(X)\hookrightarrow C_k(X)$, respectively.
        \end{theorem}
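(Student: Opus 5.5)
The plan is to prove both isomorphisms by building chain-level homotopy inverses to the two inclusions. The common tool is a Lipschitz ``straightening'' operator, constructed from the uniform Lipschitz contractions in the hypothesis.

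We begin with the inclusion $C_k^\textup{Lip}(X)\hookrightarrow C_k(X)$. Since $X$ is compact, every singular simplex is uniformly continuous. After enough iterated barycentric subdivision, every simplex therefore has image in a ball of radius $r<R/(10\lambda)$, and subdivision is chain homotopic to the identity. We then define, by induction on dimension, a straightening map $S\colon C_k^{\textup{small}}(X)\to C_k^\textup{Lip}(X)$ together with a chain homotopy $h$ between $S$ and the identity:
\begin{itemize}
\item[(a)] On vertices, $S$ is the identity.
\item[(b)] Given a simplex $\sigma$ whose boundary faces have already been replaced by Lipschitz simplices lying in a ball $B$, we fill the Lipschitz boundary sphere by coning it off through the contraction $\varphi$ of $B$. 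Concretely, we parametrize $\Delta^k$ as a cone over $\partial\Delta^k$ and set $S\sigma(t,x)=\varphi(t, S(\partial\sigma)(x))$. This is Lipschitz by the displayed estimate.
\item[(c)] The homotopy $h\sigma$ is obtained the same way, by coning the prism boundary.
\end{itemize}
The estimate $d(\varphi(s,x),\varphi(t,y))\le \lambda r|s-t|+\lambda d(x,y)$ keeps the image of each cone inside a ball of radius about $\lambda r$. The radii grow by a factor $\lambda$ at each dimension, so the initial scale must be chosen small enough, depending on $k$, that all steps stay below $R$. This makes the inclusion a chain homotopy equivalence on the level of homology, which gives $H_k^\textup{Lip}(X)\cong H_k(X)$.

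For $H_k^\textup{IC}(X)\cong H_k^\textup{Lip}(X)$ the analogous inverse must send integral currents to Lipschitz chains, which is more delicate. The approach is a Federer--Fleming type deformation, or equivalently a nerve construction in the spirit of Mitsuishi and Riedweg--Schäppi.
\begin{itemize}
\item[(a)] Fix a fine finite cover of $X$ by balls $B(x_i,\varepsilon)$ and a Lipschitz partition of unity subordinate to it. This gives a Lipschitz map $\Phi\colon X\to N$ into the geometric realization of the nerve.
\item[(b)] Using the Lipschitz contractions as in the first step, build a Lipschitz map $\Psi\colon N^{(k+1)}\to X$ skeleton by skeleton. Vertices go to the centers $x_i$, and higher cells are filled by coning.
\item[(c)] Show that $\Psi\circ\Phi$ is Lipschitz homotopic to the identity on $X$ on the relevant skeleta, with the homotopy built from the contractions as well.
\end{itemize}
An integral current $T$ is then pushed to $\Phi_\#T$ in the finite simplicial complex $N$. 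There the classical Federer--Fleming deformation theorem, valid in the polyhedron, replaces the cycle by a polyhedral chain plus a boundary. Its image under $\Psi$ is a Lipschitz chain, and the homotopy formula for currents, $\partial(H_\#([0,1]\times T))$, shows that this chain is homologous to $T$ in $\bI_\ast(X)$. Combined with the fact that the inclusion of Lipschitz chains commutes with $\Phi,\Psi$ at the chain level, this yields both surjectivity and injectivity.

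The main obstacle is this second part, and specifically the step of ensuring that a cycle lying on a possibly non-polyhedral set can be deformed in $N$ to a polyhedral one with controlled supports. The map $\Psi$ is only defined on the skeleton where the contraction estimates hold, so the deformation has to land in the $k$-skeleton with supports staying in small stars. The first thing I would try is to quote the deformation theorem for currents in a Euclidean simplicial complex, using a Lipschitz embedding of $N$ into some $\R^m$. Failing that, I would cite the known results of \cite{Mitsuishi,riedweg2009singular} directly, after checking that the uniform contraction hypothesis implies the local Lipschitz contractibility they require.
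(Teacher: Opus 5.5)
The paper gives no proof of this theorem. It records it as known and refers to Mitsuishi and Riedweg--Sch\"appi, which is exactly your fallback. Your argument for $H_k^\textup{Lip}(X)\cong H_k(X)$ is the standard straightening argument and is fine after one repair. With the linear cone parametrization, the map $(t,x)\mapsto\varphi(t,S(\partial\sigma)(x))$ need not be Lipschitz near the apex. The hypothesis bounds $d(\varphi(t,x),\varphi(t,y))$ only by $\lambda d(x,y)$ (and by $2\lambda r(1-t)$). Lipschitz continuity on the cone needs a bound by a multiple of $(1-t)\,d(x,y)$, which is not given. To fix this, run the contraction over the outer half of the radial coordinate and send the inner half to the constant point $\varphi(1,\cdot)$; alternatively, work with prism chains and acyclic carriers.

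Your self-contained route to $H_k^\textup{IC}(X)\cong H_k^\textup{Lip}(X)$ has a genuine gap, and it is not the one you flag. Deforming $\Phi_\#T$ inside the finite polyhedron $N$ is routine: $N$ is a Lipschitz neighbourhood retract of some $\R^m$, so Federer--Fleming gives $\Phi_\#T=P+\partial R$ with $P$ a Lipschitz cycle. The difficulty is getting back to $X$. In general $\Phi(X)\not\subset N^{(k+1)}$, since it meets open simplices of dimension up to the multiplicity of the cover minus one, and $R$ lives in those simplices as well. So $\Psi$ must be defined on all of $N$. Moreover, ``$\Psi\circ\Phi$ homotopic to the identity on the relevant skeleta'' has no meaning on $X$. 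What you actually need is a Lipschitz homotopy $H\colon[0,1]\times X\to X$ from $\mathrm{id}_X$ to $\Psi\circ\Phi$, so that $T-\Psi_\#P=\partial\bigl(\pm H_\#(\bb{0,1}\times T)+\Psi_\#R\bigr)$.

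Extending $\Psi$ cell by cell by coning multiplies radii by roughly $1+\lambda$ per dimension. You therefore need $(1+\lambda)^{\dim N}\varepsilon<R$, while $\dim N$ depends on $\varepsilon$. This is achievable when $X$ has finite covering dimension, as for the manifolds where the paper applies the theorem. The hypothesis itself, however, gives no bound on $\dim N$ as $\varepsilon\to0$: the Hilbert cube $\prod_j[0,2^{-j}]\subset\ell^2$ satisfies it with $\lambda=1$ via straight-line contractions, yet its fine covers have unbounded multiplicity. Even in finite dimensions, $H$ still requires its own Dugundji-type extension argument over $[0,1]\times X$. As written, the nerve sketch does not prove the stated theorem; the citation carries it, just as in the paper.
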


        Clearly, every closed Riemannian manifold satisfies the conditions of the theorem, as it is locally bi-Lipschitz to Euclidean space.

    \subsection{Isoperimetric inequalities}\label{see: isoper}
        Let $X$ be a complete metric space and let $T \in \bI_1(X)$ be an integral $1$-cycle. An integral $2$-current $V\in \bI_{2}(X)$ is called a filling of $T$ if $\partial V = T$. We say $X$ satisfies a $1$-dimensional (Euclidean) small mass isoperimetric inequality with constants $C, \Lambda>0$ if the following holds. For every cycle $T \in \bI_{1}(X)$ with $\bM(T) < \Lambda$, there exists a filling $V \in \bI_{2}(X)$ of $T$ satisfying $$\bM(V) \leq C  \bM(T)^2.$$

        \medskip

        Next, we introduce the local isoperimetric inequality for Lipschitz curves and compare it to the isoperimetric inequality for integral currents. The space of Sobolev maps from the $2$-dimensional unit disk $\mathbb{D}$ into $X$ is denoted by $W^{1,2}(\mathbb{D},X)$. For a detailed exposition of the theory on Sobolev maps in metric spaces, see \cite{Heinonen-Koskela-Shanmugalingam-Tyson-2015}. Let $\varphi \in W^{1,2}(\mathbb{D},X)$. Then, for almost all $x \in \mathbb{D}$, the map $\varphi$ has an approximate metric derivative $\apmd_x \varphi$ at $x$, which is a seminorm on $\R^2$. We need two different Jacobians for a seminorm $s$ on $\R^2$. If $s$ defines a norm, then we put $\mathbf{J}(s) = \Ha^2_{(\R^2,s)}([0,1]^2)$ and $\mathbf{J}^*(s) = 4/ \leb^2(P)$, where $\leb^2(P)$ is the Lebesgue measure of the parallelogram of smallest area containing the unit ball with respect to $s$. In case $s$ is degenerate, we set $\mathbf{J}(s) = \mathbf{J}^*(s) = 0$. We note that
        $ \mathbf{J}^*(s) \leq 2 \mathbf{J}(s)$ for every norm $s$ on $\R^2$ \cite[Lemma 9.2]{ambrosio-kirchheim-2000}. The (parameterized) Hausdorff area of $\varphi$ is defined as 
        $$\area (\varphi) \coloneqq \int_{\mathbb{D}}  \mathbf{J}(\apmd_x \varphi) \; d \leb^2(x).$$
        Analogously, the (parameterized) Gromov $\textup{mass}_*$ area, denoted by $\area^*(\varphi)$, is defined by replacing $\mathbf{J}$ with $\mathbf{J}^*$ in the previous definition. For a subset $A \subset \mathbb{D}$, the area $\area(\varphi|_A)$ of $\varphi$ restricted to $A$ is defined by integrating only over $A$. We refer to \cite{alvarez-thompson04, Lytchack-wenger-Canonical-parameterizations} for more information on Jacobians and area. For almost all $v \in \mathbb{S}^1$, the curve $t\mapsto \varphi(tv)$ is absolutely continuous on $[1/2,1)$. The trace of $\varphi$ is given by
        $$\trace(\varphi) (v)= \lim_{t \nearrow 1} \varphi(vt)$$
        for almost every $v \in \mathbb{S}^1$. We say $X$ satisfies a local quadratic isoperimetric inequality for Lipschitz curves with constants $D,\Gamma >0$ if the following holds. For every Lipschitz curve $\gamma\colon \mathbb{S}^1 \to X$ with $\textup{length}(\gamma) < 	\Gamma $, there exists  $\varphi\in W^{1,2}(\mathbb{D},X)$ satisfying $\trace (\varphi) = \gamma$ and $\textup{Area}(\varphi) \leq D \len( \gamma)^2$. If this holds for all Lipschitz curves of arbitrary length, then we say $X$ satisfies a quadratic isoperimetric inequality for Lipschitz curves with constant $D$. It is well-known to experts that a local quadratic isoperimetric inequality for Lipschitz curves implies a $1$-dimensional small mass isoperimetric inequality for integral currents. However, we could not find a reference and thus, we provide a proof. 

        \begin{proposition}\label{prop: sobolev-isop-implies-current-isoper}
            Let $X$ be a complete and separable metric space. Suppose that $X$ satisfies a local quadratic isoperimetric inequality for Lipschitz curves with constants $D,	\Gamma >0$. Then, $X$ satisfies a $1$-dimensional small mass isoperimetric inequality for integral currents with constants $2 D,\Gamma$. 
        \end{proposition}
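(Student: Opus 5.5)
Let $T\in\bI_1(X)$ be a cycle with $\bM(T)<\Gamma$. The plan has three steps: decompose $T$ into closed Lipschitz curves, fill each curve by a Sobolev disc from the hypothesis, and turn each disc into an integral $2$-current whose boundary is that curve and whose mass is controlled by $2D$ times the squared length.

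\textbf{Step 1 (decomposition).} First I invoke the decomposition theorem for integral $1$-cycles in complete separable metric spaces (Ambrosio--Kirchheim, or the cycle decomposition of Paolini--Stepanov/Bonicatto). It gives countably many closed Lipschitz curves $\gamma_i\colon\mathbb{S}^1\to X$, each parameterized with constant speed, such that
$$T=\sum_i \gamma_{i\#}\bb{\mathbb{S}^1} \quad\text{and}\quad \sum_i \len(\gamma_i)=\bM(T).$$
In particular $\len(\gamma_i)<\Gamma$ for every $i$.

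\textbf{Step 2 (filling each loop).} By the local quadratic isoperimetric inequality, for each $i$ there is $\varphi_i\in W^{1,2}(\mathbb{D},X)$ with $\trace(\varphi_i)=\gamma_i$ and $\area(\varphi_i)\le D\len(\gamma_i)^2$.

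I then need an integral current $V_i\in\bI_2(X)$ with
$$\partial V_i=\gamma_{i\#}\bb{\mathbb{S}^1} \quad\text{and}\quad \bM(V_i)\le \area^*(\varphi_i)\le 2\area(\varphi_i).$$
The second inequality uses $\mathbf{J}^*\le 2\mathbf{J}$.

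\begin{itemize}
\item[(a)] Sobolev maps satisfy the Lusin-type Lipschitz property: $\mathbb{D}$ is, up to a null set, a countable union of measurable sets $K_j$ on which $\varphi_i$ is Lipschitz. So I can define $V_i=\sum_j \varphi_{i\#}\bb{\mathbbm{1}_{K_j}}$.
\item[(b)] Its mass is bounded by $\int\mathbf{J}^*(\apmd\varphi_i)$, using the Ambrosio--Kirchheim area formula for the mass of a pushforward (mass is comparable to $\mathbf{J}^*$, i.e.\ the Gromov $\textup{mass}_*$ area).
\item[(c)] $V_i$ has finite boundary mass with $\partial V_i=\gamma_{i\#}\bb{\mathbb{S}^1}$. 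This is the delicate point and is covered below.
\end{itemize}

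\textbf{Step 3 (summing).} Put $V=\sum_i V_i$. The series converges in mass because
$$\sum_i \bM(V_i)\le 2D\sum_i\len(\gamma_i)^2\le 2D\Big(\sum_i\len(\gamma_i)\Big)^2=2D\bM(T)^2.$$
Also $\partial V=\sum_i\partial V_i=T$, the boundary series converging in mass. By closure of integral currents under mass-convergent sums with convergent boundaries, $V\in\bI_2(X)$. This gives constants $2D$ and $\Gamma$.

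\textbf{Main obstacle: the boundary identity in (c).} Pushing forward currents by Sobolev maps is not standard. I would first try approximation. Take radii $r_k\nearrow1$ such that $t\mapsto\varphi_i(r_kv)$ is absolutely continuous, with $\varphi_i(r_k\cdot)\to\gamma_i$ in a suitable sense. Then approximate $\varphi_i$ by Lipschitz maps in $W^{1,2}$; for these the pushforward of $\bb{r_k\mathbb{D}}$ has boundary given by the pushforward of the circle.

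To pass to the limit I would work in $l^\infty$, where Lipschitz extensions exist. Flat convergence of the discs plus mass bounds on their boundary curves yields the identity in the limit.

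Alternatively, I could cite the known result of Lytchak--Wenger (or Wenger's ``Plateau's problem for integral currents'') that a $W^{1,2}$ disc with Lipschitz trace induces an integral current with boundary equal to the trace current and mass at most $\area^*$. If that is available, I would quote it and skip the approximation.
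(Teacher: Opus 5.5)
Your Steps 1 and 3, and the mass bound $\bM(V_i)\le\area^*(\varphi_i)\le 2\area(\varphi_i)$, agree with the paper: decompose $T$ into injective Lipschitz loops, fill each loop, and use $\sum_i\len(\gamma_i)^2\le(\sum_i\len(\gamma_i))^2$.

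The gap is Step 2(c), which is exactly the claim the paper's proof is devoted to. That claim is that a disc $\varphi\in W^{1,2}(\mathbb{D},X)$ with Lipschitz trace $\gamma$ yields an integral current whose boundary is exactly $\gamma_\#\bb{\mathbb{S}^1}$. You flag this step but do not prove it, and your mechanism for the limit $r_k\nearrow 1$ (flat convergence of the discs plus mass bounds on the boundary curves) is not available. Set $c_r(v)=\varphi(rv)$:
\begin{itemize}
\item[(i)] Finite energy only gives $\int_{1/2}^1\len(c_r)^2\,dr<\infty$, so $\len(c_r)$ need not stay bounded as $r\nearrow 1$.
\item[(ii)] A.e.\ radial convergence $c_r\to\gamma$ does not, by itself, give weak or flat convergence of $c_{r\#}\bb{\mathbb{S}^1}$ to $\gamma_\#\bb{\mathbb{S}^1}$.
\item[(iii)] Bounding their flat distance by the current over the annulus $\{r<\norm{x}<1\}$ presupposes the very boundary identity you are trying to prove.
\end{itemize}
Falling back on an unspecified result from the literature does not close the step.

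The missing idea is to move the boundary to a region where the map is already Lipschitz. The paper extends $\varphi$ to $B=B(0,2)$ by $\tilde\varphi(v)=\gamma(v/\norm{v})$ for $\norm{v}\ge 1$. This extension is still Sobolev, has the same area (its differential on the annulus has rank one), and is Lipschitz near $\partial B$. Using $d(\tilde\varphi(x),\tilde\varphi(y))\le\norm{x-y}(g(x)+g(y))$ with $g\in L^2(B)$, the map $\tilde\varphi$ is $4k$-Lipschitz on $B_k=\{g\le k\}\cup\{\norm{v}>3/2\}$. Any $4k$-Lipschitz extension $\varphi_k\colon\overline{B}\to l^\infty$ then coincides with $\gamma(\cdot/2)$ on $\partial B$. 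Hence $\partial(\varphi_{k\#}\bb{B})=\gamma_\#\bb{\mathbb{S}^1}$ holds \emph{exactly} for every $k$, while $\bM(\varphi_{k\#}\bb{B})\le\area^*(\varphi)+16\varepsilon_k$ with $\varepsilon_k=\int_{\{g>k\}}g^2\to 0$. Compactness and closure, together with $\spt(\varphi_{k\#}\bb{B})\subset N_{4\sqrt{\varepsilon_k}}(X)$ and lower semicontinuity of mass, give $U\in\bI_2(X)$ with $\partial U=\gamma_\#\bb{\mathbb{S}^1}$ and $\bM(U)\le 2\area(\varphi)$.

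If you would rather keep your Lusin-type current $V_i$, you can repair (c) by testing. One has $\partial V_i(f,\pi)=\int_{\mathbb{D}}\det D(f\circ\varphi_i,\pi\circ\varphi_i)\,d\leb^2$. For real-valued $W^{1,2}$ functions with Lipschitz traces, this equals $\int_{\mathbb{S}^1}(f\circ\gamma_i)\,d(\pi\circ\gamma_i)=\gamma_{i\#}\bb{\mathbb{S}^1}(f,\pi)$. That identity follows by smooth approximation in $W^{1,2}(\mathbb{D})$ and $H^{1/2}$-continuity of the pairing $(a,b)\mapsto\int_{\mathbb{S}^1}a\,db$. Either way, some such argument has to be supplied.
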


        The main idea of the proof is to approximate a given Sobolev map $\varphi\in W^{1,2}(\mathbb{D},X)$ by Lipschitz maps with uniformly bounded area, similar as in \cite[Proposition 3.1]{Lytchak-wenger-robert}. We can construct a current in $X$ using these approximations and a pushforward argument. This approach has also been used in \cite{basso2023geometric, ikonen2023pushforward, denis-non-orientable}.

        \begin{proof}
            Let $\gamma\colon \mathbb{S}^1 \to X$ be Lipschitz. Suppose that there exists a Sobolev map $\varphi \colon \mathbb{D}\to X$ with $\trace (\varphi) = \gamma$. We claim that there exists $U \in \bI_2(X)$ satisfying $\partial U = \gamma_\#\bb{\mathbb{S}^1}$ and $\bM(U) \leq 2 \area(\varphi)$. Let $B = B(0,2)$ be the open ball of radius $2$ in $\R^2$. We extend $\varphi$ to the closed ball $\overline{B}$ as follows
            $$
            \Tilde{\varphi}(v) =
            \begin{cases}
            \varphi(v), &v \in \mathbb{D}\\
            \varphi\big( \frac{v}{\norm{v}}\big),  &v \notin \mathbb{D}.
            \end{cases}
            $$
            Here, $\norm{\cdot}$ denotes the standard Euclidean norm. Notice that $\Tilde{\varphi}$ belongs to $W^{1,2}(B,X)$ and $\area(\Tilde{\varphi})=\area(\varphi)$. It follows that there exists $g \in L^2(B)$ such that 
            \begin{equation}\label{eq: sobolev-isop-implies-current-isoper}
                d(\Tilde{\varphi}(x),\Tilde{\varphi}(y) )\leq \norm{x-y} (g(x)+g(y))
            \end{equation}
            for almost all $x,y \in B$; see \cite[Theorem 8.1.7]{Heinonen-Koskela-Shanmugalingam-Tyson-2015}. For $k \in \N$, we define $A_k = \{ x \in B \colon g(x) \leq k\}$ and
            $$\varepsilon_k = \int_{B \setminus A_k} g(x)^2 \; d\leb^2(x).$$
            Since $g \in L^2(B)$, Chebyshev’s inequality and the absolute continuity of integrals imply that $\leb^2(B\setminus A_k) \leq \frac{\varepsilon_k}{k^2}$ and $\varepsilon_k \to 0$ as $k \to \infty$. In particular, each $A_k$ is $\frac{\sqrt{\varepsilon_k}}{k}$-dense in $B$. It follows from \eqref{eq: sobolev-isop-implies-current-isoper} that the map $\Tilde{\varphi}$ is $2k$-Lipschitz on each $A_k$. Moreover, by construction, the map $\Tilde{\varphi}$ is Lipschitz on $\overline{B}\setminus \overline{B(0,1)}$. Therefore, for $k\in \N$ sufficiently large, $\Tilde{\varphi}$ is $4k$-Lipschitz on $B_k = A_k \cup \big( \overline{B}\setminus (\overline{B(0,\frac{3}{2})}\big)$. To simplify the notation, we assume that $\Tilde{\varphi}$ is $4k$-Lipschitz on $B_k$ for all $k \in \N$. Embed $X$ into $l^\infty$. For each $k \in \N$, let $\varphi_k \colon \overline{B} \to l^\infty$ be a $4k$-Lipschitz extension of $\Tilde{\varphi}|_{B_k}$. We define a sequence $U_k = \varphi_{k\#} \bb{B} \in \bI_2(l^\infty)$. We have
            $$\bM(\varphi_{k\#}\bb{B\setminus B_k}) \leq \Lip(\varphi_k)^2\cdot \leb^2(B\setminus B_k) \leq 16 \varepsilon_k,$$
            and
            $$\bM(\varphi_{k\#}\bb{B_k}) \leq \int_{B_k} \mathbf{J}^*(\apmd_x \varphi_k) \; d\leb^2(x) = \area^*(\varphi|_{B_k})$$
            for all $k \in \N$. Therefore,
            $$\bM(U_k) \leq  \bM(\varphi_{k\#}\bb{B_k}) +\bM(\varphi_{k\#}\bb{B\setminus B_k}) \leq \area^*(\varphi) + 16 \varepsilon_k$$
            for each $k \in \N$. Furthermore, $\varphi_k(2v) = \gamma(v)$ for all $k \in \N$ and almost every $v \in \mathbb{S}^1$, and hence $\partial U_k = \varphi_{k\#}(\partial \bb{B})= \gamma_\#\bb{\mathbb{S}^1}$ for each $k \in \N$. In particular, $\bN(U_k)$ is uniformly bounded. It follows from the compactness and closure theorem for integral currents \cite[Theorem 5.2 and Theorem 8.5]{ambrosio-kirchheim-2000} that there exists $U \in \bI_2(l^\infty)$ such that the $U_k$ converge weakly to $U$. Clearly, $\partial U = \gamma_\#\bb{\mathbb{S}^1}$. Since the $B_k$ are $\frac{\sqrt{\varepsilon_k}}{k}$-dense in $\overline{B}$ and each $\varphi_k$ is $4k$-Lipschitz, we have $\spt U_k \subset \varphi_k(\overline{B}) \subset N_{4\sqrt{\varepsilon_k}}^{l^\infty}(X)$ for all $k\in \N$. Thus, $U$ belongs to $\bI_2(X)$. Finally, the lower semicontinuity of mass implies
            $$\bM(U) \leq \liminf_{k\to \infty} \bM(U_k) \leq \area^*(\varphi)\leq 2 \area(\varphi).$$
            This proves the claim.

            \medskip

            Now, let $T \in \bI_1(X)$ be such that $\bM(T) <\Gamma$. By \cite[Theorem 5.3]{Enrico-Decomposition}, there exist countably many injective Lipschitz loops $\gamma_k \colon \mathbb{S}^1 \to X$ such that
            \begin{equation}\label{eq: sobolev-isop-implies-current-isoper-2}
               T = \sum_{k=1}^\infty \gamma_{k\#}\bb{\mathbb{S}^1} \quad\textup{ and }\quad \bM(T) = \sum_{k=1}^\infty \bM\big(\gamma_{k\#}\bb{\mathbb{S}^1}\big). 
            \end{equation}
            We conclude that $\bM\big(\gamma_{k\#}\bb{\mathbb{S}^1}\big)= \len(\gamma_k) < \Gamma$ for each $k \in \N$. Because $X$ satisfies a local quadratic isoperimetric inequality for Lipschitz curves with constants $D,\Gamma >0$, there exist countably many $\varphi_k \in W^{1,2}(\mathbb{D},X)$ satisfying $\trace(\varphi_k) = \gamma_k$ and $\area(\varphi_k) \leq D \len(\gamma_k)^2 = D\bM\big(\gamma_{k\#}\bb{\mathbb{S}^1}\big)^2$ for each $k \in \N$. It follows from the claim that there exist countably many $U_k\in \bI_2(X)$ with $\partial U_k = \gamma_{k\#}\bb{\mathbb{S}^1}$ and $\bM(U_k)\leq 2D \bM(\gamma_{k\#}\bb{\mathbb{S}^1})^2$ for every $k \in \N$. For $m \in \N$, we define $V_m = \sum_{k=1}^m U_k \in \bI_2(X)$. The second part of \eqref{eq: sobolev-isop-implies-current-isoper-2} implies that the sequence $(V_m)_m$ is Cauchy in $\bI_2(X)$. We denote the limit by $V \in \bI_2(X)$. We have $\partial V = T$. Finally, by the convexity of $x \mapsto x^2$, 
            \begin{align*}
                \bM(V) &\leq \lim_{m\to \infty} \sum_{k=1}^m \bM(U_k) \leq 2 D \lim_{m\to \infty} \sum_{k=1}^m \bM(\gamma_{k\#}\bb{\mathbb{S}^1})^2
                \\
                &\leq 2 D \lim_{m\to \infty} \left(\sum_{k=1}^m \bM(\gamma_{k\#}\bb{\mathbb{S}^1})\right)^2 = 2 D \bM(T)^2.
            \end{align*}
            This completes the proof. 
        \end{proof}

        For the remainder of the section, let $X$ be a metric space that is homeomorphic to a closed, oriented, connected smooth surface. Furthermore, we suppose that $X$ is Ahlfors $2$-regular and linearly locally contractible. It was shown in \cite{semmes-1996} that $X$ supports a weak 1-Poincar\'e inequality; see also \cite[Corollary 1.6]{basso2023geometric}. It is well-known that this implies that $X$ is quasiconvex; see e.g. \cite[Theorem 8.3.2]{Heinonen-Koskela-Shanmugalingam-Tyson-2015}. Poincar\'e inequalities are strongly related to relative isoperimetric inequalities. More precisely, by \cite[Theorem 1.1]{korte} there exist $D,\lambda\geq 1$ such that
        \begin{equation}\label{eq: relative-isoper-ineq}
            \min\{\Ha^2(B\cap E), \Ha^2(B\setminus E) \} \leq D \cdot \Ha^{1}(\lambda B \cap \partial E)^2
        \end{equation}
        for every Borel set $E \subset X$ and every ball $B\subset X$. Here, $\lambda B$ denotes the ball with the same center as $B$ and radius equal to $\lambda$ times the radius of $B$. Notice that in \cite{korte} the inequality \eqref{eq: relative-isoper-ineq} is formulated for the Hausdorff measure of codimension one $\Ha$ defined as
        $$\Ha(E) = \lim_{R\to 0} \inf\left\{\sum_{i=0}^\infty \frac{\Ha^2(B(x_i,r_i))}{r_i} \colon E \subset \bigcup_{i=1}^\infty B(x_i,r_i), r_i \leq R \right\}$$
        for all Borel sets $E \subset X$. Since $X$ is Ahlfors $2$-regular, we have $\Ha \leq c \cdot \Ha^1$, where $c>0$ denotes the Ahlfors $2$-regularity constant of $X$. Thus, the relative isoperimetric inequality as in \eqref{eq: relative-isoper-ineq} follows.

        \begin{theorem}\label{thme: small-mass-isop-ahlfors-llc}
            Let $X$ be a metric space that is homeomorphic to a closed, oriented, connected smooth surface. Suppose that $X$ is Ahlfors $2$-regular and linearly locally contractible. Then, $X$ satisfies a local quadratic isoperimetric inequality for Lipschitz curves.
        \end{theorem}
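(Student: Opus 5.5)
The plan is to reduce to Jordan curves, produce a small Jordan domain bounded by the curve, control its area with the relative isoperimetric inequality \eqref{eq: relative-isoper-ineq}, and then parameterize the closed domain by a Sobolev disc with controlled area. Let $\gamma\colon \mathbb{S}^1\to X$ be Lipschitz with $\ell = \len(\gamma)$ small, to be fixed below in terms of $\diam(X)$, the Ahlfors constant $c$ and the contractibility constant $\lambda$.

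\emph{Reduction to Jordan curves.} First I would show it suffices to treat injective $\gamma$. A Lipschitz loop can be written, up to reparametrization and constant pieces, as a countable ``tree-like'' concatenation of Jordan loops $\gamma_i$ with $\sum_i \len(\gamma_i)\le \ell$. Given Sobolev fillings $\varphi_i$ of the $\gamma_i$ with $\area(\varphi_i)\le D\len(\gamma_i)^2$, they can be glued into a single $\varphi\in W^{1,2}(\mathbb{D},X)$ with $\trace(\varphi)=\gamma$. The resulting area is at most $\sum_i D\len(\gamma_i)^2\le D\ell^2$, by the convexity argument used in the proof of Proposition \ref{prop: sobolev-isop-implies-current-isoper}. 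This is the standard gluing argument in the work of Lytchak and Wenger, and I would cite it rather than redo it.

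\emph{Small Jordan domain.} Let $\gamma$ be a Jordan curve and set $x=\gamma(1)$. Its image lies in $B(x,\ell)$. By linear local contractibility, $\gamma$ is null-homotopic in $B(x,\lambda\ell)$, provided $\lambda\ell<\diam(X)/\lambda$. Since $X$ is a surface, the Jordan curve theorem together with this null-homotopy should yield a Jordan domain $\Omega\subset X$ with $\partial\Omega=\gamma(\mathbb{S}^1)$ and $\Omega\subset B(x,\lambda\ell)$. The idea is that a component of $X\setminus\gamma$ not contained in this ball would force $\gamma$ to be homotopically essential or separating at a large scale. By the Schoenflies theorem in surfaces, $\overline\Omega$ is a closed disc. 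I expect this topological step to need care when the genus is positive, but smallness of $\ell$ relative to the injectivity scale of $X$ should handle it.

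\emph{Area bound.} Apply \eqref{eq: relative-isoper-ineq} with $E=\Omega$ and $B=B(x,R)$, where $R=K\lambda\ell$ and $K$ is chosen so large that Ahlfors regularity gives
\[
\Ha^2(B\setminus\Omega)\ge c^{-1}R^2-c(\lambda\ell)^2 > D\ell^2 .
\]
Since $\Ha^1(\lambda B\cap\partial\Omega)\le\ell$, the minimum in \eqref{eq: relative-isoper-ineq} must then be $\Ha^2(\Omega)$, so $\Ha^2(\overline\Omega)\le D\ell^2$, using $\Ha^2(\gamma)=0$.

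\emph{Sobolev parametrization; main obstacle.} It remains to find $\varphi\in W^{1,2}(\mathbb{D},\overline\Omega)$ with $\trace(\varphi)=\gamma$ and $\area(\varphi)\le\Ha^2(\overline\Omega)$. I expect this to be the hardest step. My first attempt would be to view $\overline\Omega$ as a metric disc with finite $\Ha^2$-measure and rectifiable boundary, and invoke the uniformization theorem of Ntalampekos--Romney. It provides a weakly quasiconformal, monotone map $u\colon\overline{\mathbb{D}}\to\overline\Omega$ in $W^{1,2}$ with $\area(u)\le\Ha^2(\overline\Omega)$, by the area formula and monotonicity. Its boundary trace is a weakly monotone parametrization of $\gamma$. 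Precomposing with a boundary reparametrization and a bi-Lipschitz change of the disc, or alternatively using the solution of Plateau's problem in $\overline\Omega$ with a reparametrized trace, then gives trace exactly $\gamma$ up to a constant-speed change. This yields the local quadratic isoperimetric inequality with constants depending only on $c$, $\lambda$ and $D$.
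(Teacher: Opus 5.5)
\textbf{Where the proposal stands.} Your Steps 2 and 3 are essentially sound.

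For Step 2, take $\lambda\ell$ below a scale at which every ball of that radius lies in a chart homeomorphic to $\R^2$. The planar Jordan--Schoenflies theorem then gives $\Omega$, and a winding-number argument shows that the null-homotopy covers $\Omega$, so $\Omega\subset B(x,\lambda\ell)$.

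Step 3 is exactly the estimate the paper extracts from \eqref{eq: relative-isoper-ineq}, carried out more carefully than in the paper.

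\textbf{The gap: the trace is not exactly $\gamma$.} The gap is in the last step, which you flag but do not close. The definition requires a Sobolev disc whose trace is \emph{exactly} $\gamma$. A Ntalampekos--Romney parametrization $u$ of $\overline{\Omega}$, or an area minimizer in the Plateau class, only has boundary values $\gamma_0\circ h$. Here $\gamma_0$ is the arc-length parametrization and $h\colon\mathbb{S}^1\to\mathbb{S}^1$ is merely weakly monotone, being a uniform limit of homeomorphisms.
\begin{itemize}
\item In general such an $h$ may collapse arcs to points. Then $u\circ\Psi$ has non-injective trace for every homeomorphism $\Psi$ of $\overline{\mathbb{D}}$, so no bi-Lipschitz (or any) change of the disc produces $\gamma$.
\item Even when $h$ is a homeomorphism, precomposition would need a quasiconformal extension of $h^{-1}$, i.e.\ $h$ quasisymmetric. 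Nothing guarantees this.
\item The other natural fix is to glue an annulus mapped into $\gamma(\mathbb{S}^1)$ that interpolates between $h$ and the identity. This has finite energy only if $h\in W^{1/2,2}$ with respect to arc length, because energies of maps into the curve are computed with its length metric. That does not follow from $\gamma_0\circ h$ being a trace when $\gamma(\mathbb{S}^1)$ is not chord-arc in $X$.
\item The Plateau alternative has the same defect.
\end{itemize}
As written, you obtain fillings with a weakly monotone boundary parametrization, not the isoperimetric inequality in the sense of the definition.

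\textbf{How the paper avoids this.} The paper sidesteps the issue by invoking Theorem 1.4 of Lytchak--Wenger (\emph{Canonical parameterizations of metric disks}). It covers $X$ by finitely many small closed discs $Y$ and notes that your Step 3 bound $\Ha^2(\Omega)\le D\len(\partial\Omega)^2$ holds for every Jordan domain $\Omega\subset Y$. That theorem turns precisely this Jordan-domain bound into a quadratic isoperimetric inequality for \emph{all} Lipschitz curves in $Y$, with trace equal to the curve. A Lebesgue-number argument then puts every short curve inside some $Y$.

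Following that route makes both your Step 2 and your reduction to Jordan curves unnecessary. This helps, because the reduction is not accurate as stated: a general Lipschitz loop also has backtracking, tree-like parts that are not Jordan loops. Those parts must be filled with zero area but controlled energy, and gluing countably many discs along such a structure is itself a nontrivial lemma that would need a precise reference.

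To keep your own route, you must either cite such a result or supply an extra argument that regularizes the boundary correspondence before correcting the trace.
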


        \begin{proof}
            Let $Y \subset X$ be a subset that is homeomorphic to the closed disk $\overline{\mathbb{D}}$ with $\diam Y < \diam (X)/3$ and let $\Omega \subset Y$ be a Jordan domain. A Jordan domain $\Omega$ is an open set homeomorphic to $\mathbb{D}$ such that $\partial\Omega$ is homeomorphic to $\mathbb{S}^1$. The relative isoperimetric inequality \eqref{eq: relative-isoper-ineq} implies that
            $$\Ha^2(\Omega) \leq D \cdot \Ha^1(\partial \Omega)^2 = D \cdot \len(\partial \Omega)^2.$$
            It follows from \cite[Theorem 1.4]{Lytchack-wenger-Canonical-parameterizations} that $Y$ satisfies a quadratic isoperimetric inequality for Lipschitz curves with constant $D$. Covering $X$ by finitely many subsets $Y$ as above, we conclude that $X$ satisfies a local quadratic isoperimetric inequality for Lipschitz curves.
        \end{proof}

\section{Codimension one homology}    
     Throughout this section, let $X$ be a metric space with finite Hausdorff $n$-measure that is homeomorphic to a closed, oriented, connected Riemannian $n$-manifold $M$. We denote by $\varrho\colon X \to M$ a homeomorphism of degree one. It is a classical fact that every primitive codimension one singular homology class in $M$ can be represented by an embedded submanifold. A homology class is called primitive if it is the zero class or if it is not a nontrivial multiple of another class. We need a version of this statement for the homology via integral currents.

    \begin{lemma}\label{lemma: topog-repr-homo-class-map-into-S1}
        Let $[\eta] \in H_{n-1}^\textup{IC}(M)$ be primitive and non-zero. Then, there exists a surjective smooth map $f\colon M \to  \mathbb{S}^1$ with the following property. For almost all $p \in \mathbb{S}^1$, the preimage $f^{-1}(p) = N_p$ is a closed, oriented smooth $(n-1)-$manifold and $\big[ \bb{N_p} \big] =  [\eta] = [\langle \bb{M},f,p\rangle]$.
    \end{lemma}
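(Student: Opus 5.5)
The plan is to transfer the classical topological statement to currents via Theorem \ref{thme: sing-integral-homology-coincide}, and then to identify the slice with the current of the fibre by an explicit computation.

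\emph{Step 1: a primitive singular class.} Since $M$ is a closed Riemannian manifold, Theorem \ref{thme: sing-integral-homology-coincide} gives isomorphisms $H_{n-1}^\textup{IC}(M)\cong H_{n-1}^\textup{Lip}(M)\cong H_{n-1}(M)$ induced by the inclusions. Under these isomorphisms $[\eta]$ corresponds to a primitive non-zero class $a\in H_{n-1}(M)$.

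\emph{Step 2: a map to the circle.} By Poincaré duality, $a$ is dual to a class $\alpha\in H^1(M;\Z)\cong[M,\mathbb{S}^1]$. Primitivity of $a$ gives primitivity of $\alpha$, since $H^1(M;\Z)$ is torsion free. Choose a smooth representative $f\colon M\to\mathbb{S}^1$ of $\alpha$. Then $f$ is not null-homotopic, so it is surjective, because a non-surjective map into $\mathbb{S}^1$ factors through an interval.

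By Sard's theorem, almost every $p\in\mathbb{S}^1$ is a regular value. For such $p$, the fibre $N_p=f^{-1}(p)$ is a closed smooth $(n-1)$-submanifold. We orient it so that $f^*(d\theta)$ followed by the orientation of $N_p$ gives the orientation of $M$, with the sign convention matched to \eqref{eq: slice-represent-1}.

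The classical Pontryagin–Thom fact is that $[N_p]$ is Poincaré dual to $f^*[\mathbb{S}^1]^*=\alpha$, so $[N_p]=a$. This holds independently of $p$, since any two regular fibres cobound $f^{-1}$ of an arc. The resulting smooth $(n-1)$-cycle can be triangulated into a Lipschitz singular cycle, and that cycle induces the current $\bb{N_p}$. Therefore, through the isomorphisms of Theorem \ref{thme: sing-integral-homology-coincide}, we get $[\bb{N_p}]=[\eta]$ in $H_{n-1}^\textup{IC}(M)$. The point to check is that the Lipschitz triangulation of $N_p$, viewed as a current, equals $\bb{N_p}$; this is standard, since triangulation simplices are bi-Lipschitz onto their images.

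\emph{Step 3: identifying the slice.} We have $\bb{M}=[M,1,\tau_M]$. By the explicit representation \eqref{eq: repres-slice} and the definition \eqref{eq: def-slice-S1}, for almost every $p$
$$\langle\bb{M},f,p\rangle=[N_p,1,\tau_p].$$
At a regular value, $d f$ is surjective along $N_p$. Hence \eqref{eq: slice-represent-1} reduces to the classical statement $\tau_M=\xi\wedge\tau_p$, with $\xi$ a positive multiple of the gradient of $\psi\circ f$. Thus $\tau_p$ is exactly the orientation of $N_p$ fixed in Step 2, and $\langle\bb{M},f,p\rangle=\bb{N_p}$ as currents.

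\emph{Main obstacle.} The hardest part is the orientation and sign bookkeeping in Step 3, together with matching the sign convention of Poincaré duality in Step 2. If the sign comes out wrong, replace $f$ by its composition with a reflection of $\mathbb{S}^1$; this preserves all other properties. The rest is classical topology and routine use of the slicing formulas from Section \ref{sec: currents}.
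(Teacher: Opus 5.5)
Your proposal is correct and follows essentially the same route as the paper: pass to singular homology via Theorem \ref{thme: sing-integral-homology-coincide}, realize the Poincar\'e dual class by a smooth map $f\colon M\to\mathbb{S}^1$ whose regular fibres $N_p$ represent the class, compare $\bb{N_p}$ with the singular class through a triangulation, and identify $\langle\bb{M},f,p\rangle=[N_p,1,\tau_p]$ with $\bb{N_p}$ by checking that $\tau_p$ is the pullback orientation. The details you add beyond the paper are sound: surjectivity of $f$ from non-null-homotopy, Sard's theorem for the ``almost all $p$'', and composing with a reflection to absorb a possible sign mismatch. One small correction: primitivity is not actually needed here, since $N_p$ is not required to be connected, and in any case it transfers under any isomorphism rather than because $H^1(M;\Z)$ is torsion-free.
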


    Here, $\langle \bb{M},f,p\rangle$ is the slice of the fundamental class $\bb{M}$ of $M$ by $f$ at $p\in \mathbb{S}^1$ as defined in Section \ref{sec: currents}, and $\bb{N_p} \in \bI_{n-1}(M)$ denotes the integral $(n-1)$-cycle induced by $N_p$.

    \begin{proof}
        The homology groups $H_{n-1}^\textup{IC}(M)$ and $H_{n-1}(M)$ are isomorphic; see Theorem \ref{thme: sing-integral-homology-coincide}. By abuse of notation, we also denote by $[\eta]$ the singular homology class given by the isomorphism $H_{n-1}(M) \to H_{n-1}^\textup{IC}(M)$. 
        %It follows from the universal coefficient theorem that $H_{n-1}(M) \cong \textup{Hom}(H_1(M),\Z)$. Since $\Z$ is abelian we further have $\textup{Hom}(H_1(M),\Z) \cong \textup{Hom}(\pi_1(M),\Z)$. It is not difficult to see that the elements in $\textup{Hom}(\pi_1(M),\Z)$ are induced by continuous maps $f\colon M \to \mathbb{S}^1$. In particular, $H_{n-1}(M) \cong [M,\mathbb{S}^1]$, where $[M,\mathbb{S}^1]$ denotes the set of homotopy classes $M \to \mathbb{S}^1$. We fix a smooth representative $f \colon M \to \mathbb{S}^1$ of $[\eta]\in H_{n-1}(M)$. Let $t \in \mathbb{S}^1$ be a regular value such that the preimage $f^{-1}(t) = N_t$ is a closed, oriented smooth submanifold of $M$. The Poincar\`e duality implies that the homology class in $H_{n-1}(M)$ induced by the fundamental class of $N_t$ coincides with $[\eta]$; see e.g. \cite{Meeks-codimension-one-homol}. 
        It follows from the Poincar\'e duality that $[\eta]$ can be represented by a surjective smooth map $f\colon M \to \mathbb{S}^1$. The standard duality theorems imply that for each regular value $p \in \mathbb{S}^1$, the preimage $f^{-1}(p) = N_p$ is a closed, oriented submanifold representing $[\eta]$; see \cite{Meeks-codimension-one-homol}. Using a triangulation of $N_p$ and Theorem \ref{thme: sing-integral-homology-coincide}, one easily checks that the singular homology class induced by $N_p$ and the homology class in $H_{n-1}^{IC}(M)$ induced by $\bb{N_p}$ agree under the isomorphism $H_{n-1}(M) \to H_{n-1}^\textup{IC}(M)$. Now, let $p\in \mathbb{S}^1$ be a regular value such that the slice $\langle \bb{M},f,p\rangle$ exists. Then, $\langle \bb{M},f,p\rangle$ has a representation $[M \cap f^{-1}(p),\theta,\tau_p]$ with $\theta = 1$ and $\tau_p$ is an orientation of $M\cap f^{-1}(p) = N_p$ induced by $f$ and the orientation of $M$; see \eqref{eq: repres-slice}. It follows from the proof of \cite[Theorem 9.7]{ambrosio-kirchheim-2000} that $\tau_p$ is the same orientation as the pullback orientation of $N_p$ given by $f$; see \cite[p. 100]{Guillemin-Pollack-Differential-topology}. Therefore, $\langle \bb{M},f,p\rangle = [N_p, \theta, \tau_p] = \bb{N_p}$. In particular, $[ \bb{N_p} ] =  [\eta] = [\langle \bb{M},f,p\rangle]$ for almost all $p \in \mathbb{S}^1$. This completes the proof. 
    \end{proof}

    Theorem \ref{thme: codimension-one-homology} will be a direct consequence of the following proposition.

    \begin{proposition}\label{prop: induced-homo-integral-homology-codim-1}
        Suppose that $X$ has a metric fundamental class. Then, there exists $\varepsilon>0$ such that every Lipschitz $\varphi\colon X \to M$ satisfying $d(\varrho,\varphi) < \varepsilon$ induces a surjective homomorphism $\varphi_*\colon H_{n-1}^\textup{IC}(X)\to H_{n-1}^\textup{IC}(M)$. 
    \end{proposition}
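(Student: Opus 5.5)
Let $T\in\bI_n(X)$ be a metric fundamental class of $X$. First choose $\varepsilon>0$ so small that every Lipschitz $\varphi\colon X\to M$ with $d(\varrho,\varphi)<\varepsilon$ is homotopic to $\varrho$; this is possible by the retraction argument after Lemma \ref{lemma: lip-approx-easy}. Degree is a homotopy invariant, so $\deg(\varphi)=\deg(\varrho)=1$. Property (b) of Definition \ref{def: metric fundamental class} then gives $\varphi_\#T=\bb{M}$. Surjectivity of $\varphi_*$ will follow once every element of a generating set of $H_{n-1}^\textup{IC}(M)$ lies in the image.

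\textbf{Generating set.} The group $H_{n-1}^\textup{IC}(M)\cong H_{n-1}(M)$ (Theorem \ref{thme: sing-integral-homology-coincide}) is finitely generated. By Poincar\'e duality and the universal coefficient theorem it is isomorphic to $H^1(M)\cong \textup{Hom}(H_1(M),\Z)$, hence free abelian. Choose a basis $[\eta_1],\dots,[\eta_m]$. Each basis element of a free abelian group is primitive and non-zero, so Lemma \ref{lemma: topog-repr-homo-class-map-into-S1} applies to it. If $m=0$ there is nothing to prove. One could also use all primitive classes, since these generate.

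\textbf{Lifting one class.} Fix $[\eta]=[\eta_i]$ and let $f\colon M\to\mathbb{S}^1$ be the smooth map from Lemma \ref{lemma: topog-repr-homo-class-map-into-S1}. Then $g=f\circ\varphi\colon X\to\mathbb{S}^1$ is Lipschitz. By the construction in Section \ref{sec: currents}, for almost every $p\in\mathbb{S}^1$ the slice $\langle T,g,p\rangle$ is a well-defined integral $(n-1)$-cycle in $X$. By Lemma \ref{lemma: pushfw-slice-is-slice-phsfwd}, applied with $Y=M$ (both spaces are separable, $T$ is a cycle, $f$ is Lipschitz),
\[
\varphi_\#\langle T,f\circ\varphi,p\rangle=\langle\varphi_\#T,f,p\rangle=\langle\bb{M},f,p\rangle
\]
for almost every $p$. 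Intersect the three full-measure sets of $p$: those where the slice in $X$ is a cycle, those where this identity holds, and those where Lemma \ref{lemma: topog-repr-homo-class-map-into-S1} gives $[\langle\bb{M},f,p\rangle]=[\eta]$. For any such $p$, set $\xi=\langle T,g,p\rangle$. Then $[\xi]\in H_{n-1}^\textup{IC}(X)$ and $\varphi_*[\xi]=[\varphi_\#\xi]=[\eta]$. Doing this for each basis element shows $\varphi_*$ is surjective. The map $\varphi_*$ is a homomorphism because it is induced by a Lipschitz map.

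\textbf{Main obstacle.} The delicate point is the $\mathbb{S}^1$-valued slicing. One needs $\langle T,g,p\rangle$ to be a genuine cycle for a.e. $p$, and the pushforward identity to hold on a common full-measure set. Both are supplied by the preliminaries, namely the chart construction in \eqref{eq: def-slice-S1} and Lemma \ref{lemma: pushfw-slice-is-slice-phsfwd}. What remains is the bookkeeping of almost-everywhere sets and the check that $\deg\varphi=1$, which forces $\varphi_\#T=\bb{M}$ and not some other multiple of $\bb{M}$. The algebraic step, that $H_{n-1}(M)$ is generated by primitive classes, is standard.
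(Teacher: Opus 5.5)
Your proof is correct and follows the paper's argument. Both proofs slice the metric fundamental class $T$ by $f\circ\varphi$, use Lemma \ref{lemma: pushfw-slice-is-slice-phsfwd} together with $\varphi_\#T=\bb{M}$ (which comes from $\deg\varphi=1$), and apply Lemma \ref{lemma: topog-repr-homo-class-map-into-S1} to a primitive generating set.

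The differences are only organizational. You run the slicing argument for an arbitrary $\varphi$ with $d(\varrho,\varphi)<\varepsilon$, which only needs $\varphi$ to be homotopic to $\varrho$; this makes the paper's preliminary step, that all such $\varphi$ are Lipschitz homotopic and induce the same $\varphi_*$, unnecessary. You also justify the existence of a primitive generating set through the freeness of $H_{n-1}(M)\cong\textup{Hom}(H_1(M),\Z)$, whereas the paper takes it for granted.
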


    Recall that a metric fundamental class of $X$ is a cycle $T \in \bI_n(X)$ satisfying $\varphi_\#T = \deg(\varphi) \cdot \bb{M}$ for every Lipschitz map $\varphi\colon X \to M$, where $\deg(\varphi)$ denotes the (topological) degree of $\varphi$. In fact, $T$ also satisfies a second property that we do not need here; see Definition \ref{def: metric fundamental class}.

    \begin{proof}[Proof of Proposition \ref{prop: induced-homo-integral-homology-codim-1}]        
        Since $M$ is a closed Riemannian manifold, there exists a bi-Lipschitz embedding $\iota\colon M \xhookrightarrow{} \R^N$ and a Lipschitz retraction $\pi \colon N_{2\varepsilon}(\iota(M)) \to \iota(M)$ for some $N\geq n$ and $\varepsilon>0$. See e.g. \cite[Theorem 3.10]{Heinonen-geom-embeddings} and \cite[Theorem 3.1]{hohti-1993}. Let $\varphi,\psi \colon X \to M$ be two Lipschitz maps satisfying $d(\varphi,\varrho),d(\psi,\varrho)<\varepsilon$. The straight-line homotopy $H$ between $(\iota \circ\varphi)$ and $(\iota \circ\psi)$ stays inside $N_{2\varepsilon}(\iota(M))$. Therefore, the composition $(\iota^{-1}\circ \pi\circ H)\colon [0,1]\times X \to M$ is well-defined, Lipschitz, and a homotopy between $\varphi$ and $\psi$. We conclude that $\varphi_*$ and $\psi_*$ are equal as homomorphism $ H_{n-1}^\textup{IC}(X)\to H_{n-1}^\textup{IC}(M)$.

        \medskip
        The homology group $H_{n-1}(M)$ is finitely generated because $M$ is an absolute neighborhood retract; see e.g. \cite[Corollary A.8]{Hatcher}. Thus, $H_{n-1}^\textup{IC}(M)$ is finitely generated as well. We fix a generating set of $H_{n-1}^\textup{IC}(M)$ that consists of primitive elements of $H_{n-1}^\textup{IC}(M)$. Clearly, it suffices to show that for each generator $[\eta] \in H_{n-1}^\textup{IC}(M)$, there exists $[\xi] \in H_{n-1}^\textup{IC}(X)$ with $\varphi_*[\xi] = [\eta]$ for all Lipschitz maps $\varphi\colon X \to M$ satisfying $d(\varrho,\varphi) < \varepsilon$. Let $[\eta] \in H_{n-1}^\textup{IC}(M)$ be a generator. It follows from Lemma \ref{lemma: topog-repr-homo-class-map-into-S1} that there exists a smooth map $f\colon M \to \mathbb{S}^1$ such that $ [\langle \bb{M},f,p\rangle]= [\eta]$ for almost all $p\in \mathbb{S}^1$. By Lemma \ref{lemma: lip-approx-easy}, we can approximate the homeomorphism $\varrho\colon X \to M$ by a Lipschitz map $\varphi\colon X\to M$ that is homotopic to $\varrho$, satisfies $d(\varphi,\varrho)<\varepsilon$ and such that the composition $f\circ \varphi$ is surjective. The degree is a homotopy invariant and thus, $\deg(\varphi) = \deg(\varrho) = 1$. As explained in Section \ref{sec: currents}, the slice $\langle T, g,p\rangle$ exists and is an integral $(n-1)$-cycle for almost all $p \in \mathbb{S}^1$. Lemma \ref{lemma: pushfw-slice-is-slice-phsfwd} implies 
        $$\varphi_*[\langle T,f\circ \varphi,p\rangle]=[\varphi_\#\langle T,f\circ \varphi,p\rangle] = [\langle \varphi_\# T, f,p\rangle]=  [\langle \bb{M}, f,p\rangle] = [\eta],$$
        for almost every $p\in \mathbb{S}^1$. Here, we used that $T$ is the metric fundamental class of $X$ and hence, $\varphi_\#T = \deg(\varphi) \cdot \bb{M} = \bb{M}$. This completes the proof. 
    \end{proof}

    We conclude the section with the short proof of Theorem \ref{thme: codimension-one-homology}

    \begin{proof}[Proof of Theorem \ref{thme: codimension-one-homology}]
        Let $\varepsilon>0$ be given by Proposition \ref{prop: induced-homo-integral-homology-codim-1}. Applying Lemma \ref{lemma: lip-approx-easy} to the homeomorphism $\varrho\colon X \to M$ we obtain a Lipschitz map $\varphi\colon X \to M$ with $d(\varphi,\varrho)<\varepsilon$. Therefore, the induced homomorphism $\varphi_*\colon H_{n-1}^\textup{IC}(X)\to H_{n-1}^\textup{IC}(M)$ is surjective. We denote by $\Theta \colon H_{n-1}^\textup{IC}(M) \to H_{n-1}(M)$ the isomorphism of Theorem \ref{thme: sing-integral-homology-coincide}. Since $\varrho$ is a homeomorphism, the induced map $\varrho^{-1}_*\colon H_{n-1}(M) \to H_{n-1}(X)$ is an isomorphism. We conclude that $(\varrho^{-1}_* \circ \Theta \circ \varphi_*)\colon H_{n-1}^\textup{IC}(X) \to H_{n-1}(X)$ is a surjection. 
    \end{proof}

\section{One-dimensional homology}\label{sec: one-dime}
        The goal of this section is to prove Theorem \ref{thme: one-homology}. Let $X$ be a quasiconvex metric space with finite Hausdorff $n$-measure that satisfies a $1$-dimensional small mass isoperimetric inequality for integral currents with constants $ C, \Lambda>0$. Furthermore, suppose that there exists a homeomorphism $\varrho\colon X \to M$, where $M$ is a closed, Riemannian manifold. Notice that $X$ still satisfies a $1$-dimensional small mass isoperimetric inequality for integral currents after a bi-Lipschitz change of the metric and the conclusion of Theorem \ref{thme: one-homology} is invariant under bi-Lipschitz changes of the metric. Thus, we may assume that $X$ is geodesic. 
        
        \medskip

        First, we show that every homology class in $H_1^\textup{IC}(X)$ has a representative given by a finite sum of injective Lipschitz loops.

        \begin{lemma}\label{lemma: integral-homology-repres-finite-loops-}
            Let $T\in \bI_1(X)$ be a cycle. Then, there exist finitely many injective Lipschitz loops $\gamma_i \colon \mathbb{S}^1 \to X$ and $V \in \bI_2(X)$ such that
            $$\partial V = T - \sum_{i=1}^N \gamma_{i\#}\bb{\mathbb{S}^1}.$$
        \end{lemma}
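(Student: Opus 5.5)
We use the decomposition theorem for integral $1$-cycles that already appeared in the proof of Proposition \ref{prop: sobolev-isop-implies-current-isoper}. By \cite[Theorem 5.3]{Enrico-Decomposition}, there are countably many injective Lipschitz loops $\gamma_k\colon \mathbb{S}^1\to X$ with
$$T=\sum_{k=1}^\infty \gamma_{k\#}\bb{\mathbb{S}^1}\quad\text{and}\quad \bM(T)=\sum_{k=1}^\infty \bM\big(\gamma_{k\#}\bb{\mathbb{S}^1}\big).$$
Since the series of masses converges, we can choose $N$ so large that the tail $R_N=\sum_{k>N}\gamma_{k\#}\bb{\mathbb{S}^1}$ has $\bM(R_N)\le\sum_{k>N}\len(\gamma_k)<\Lambda$. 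Here $\Lambda$ is the constant of the small mass isoperimetric inequality.

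The tail $R_N$ is an integral $1$-cycle. It is integral because it is the difference of $T$ and finitely many integral cycles $\gamma_{k\#}\bb{\mathbb{S}^1}$. It is a cycle because each loop current has zero boundary. Because $\bM(R_N)<\Lambda$, the $1$-dimensional small mass isoperimetric inequality gives $V\in\bI_2(X)$ with $\partial V=R_N$ and $\bM(V)\le C\bM(R_N)^2$. We then have
$$\partial V = R_N = T-\sum_{i=1}^N\gamma_{i\#}\bb{\mathbb{S}^1},$$
which is the statement of the lemma.

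The main point to check is that the tail really is an integral current and that the decomposition holds in the form we use. That is, the series $\sum_k\gamma_{k\#}\bb{\mathbb{S}^1}$ converges in mass to $T$, so that $R_N=T-\sum_{k\le N}\gamma_{k\#}\bb{\mathbb{S}^1}$ holds exactly. Convergence in mass follows from summability of the masses. The cycle property of $R_N$ then follows from $\partial T=0$ and $\partial(\gamma_{k\#}\bb{\mathbb{S}^1})=\gamma_{k\#}\partial\bb{\mathbb{S}^1}=0$. Integrality holds because $\bI_1(X)$ is a group. Completeness of $X$, which the decomposition theorem and the isoperimetric inequality require, holds because $X$ is compact, being homeomorphic to a closed manifold.
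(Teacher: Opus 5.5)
Your argument is correct, and it differs from the paper's proof in one step.

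The paper chooses $N$ so that each \emph{individual} loop current $T_i=\gamma_{i\#}\bb{\mathbb{S}^1}$ with $i>N$ has $\bM(T_i)<\min\{1,\Lambda\}$. It fills each such $T_i$ separately by some $V_i$. It then uses $\bM(V_i)\le C\bM(T_i)^2\le C\bM(T_i)$ to see that $V=\sum_{i>N}V_i$ converges to an integral $2$-current with $\partial V=\sum_{i>N}T_i$. You instead make the \emph{whole tail} $R_N$ small and apply the isoperimetric inequality once.

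Your route is shorter. It uses only the existence of a filling for a small-mass cycle, so the bound $\bM(V)\le C\bM(R_N)^2$ plays no role. It also avoids justifying that an infinite series of integral $2$-currents converges to an integral current. In the paper that step implicitly relies on mass convergence of the partial sums and of their boundaries, together with the closure theorem.

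The paper's route, in exchange, only ever fills single Lipschitz loops. It would therefore still work if one only knew a loop-wise isoperimetric inequality with summable mass control for the fillings. This is the same scheme as in the second half of the proof of Proposition~\ref{prop: sobolev-isop-implies-current-isoper}.

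A minor remark: the bound $\bM(R_N)\le\sum_{k>N}\len(\gamma_k)$ is small because mass equals length for injective loops. It is cleaner to write $\bM(R_N)\le\sum_{k>N}\bM(\gamma_{k\#}\bb{\mathbb{S}^1})$, which is directly the tail of the convergent series of masses.
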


        \begin{proof}
            It follows from \cite[Theorem 5.3]{Enrico-Decomposition} that there exist countably many injective Lipschitz loops $\gamma_i \colon \mathbb{S}^1\to X$ such that 
            $$T = \sum_i^\infty  T_i\quad \textup{and} \quad \bM(T) = \sum_i^\infty \bM(T_i),$$
            where $T_i = \gamma_{i\#}\bb{\mathbb{S}^1}$. Let $N$ be sufficiently large such that for all $i>N$ we have $\bM(T_i) < \min({1,\Lambda})$. Then, for each $i>N$, there exists a filling $V_i \in \bI_2(X)$ of $T_i$ satisfying 
            $$\bM(V_i) \leq C\bM(T_i)^2 \leq C \bM(T_i).$$
            Therefore, the sum $V = \sum_{i>N} V_i$ converges and defines an integral $2$-current that is a filling of $\sum_{i>N} T_i$. In particular, 
            $$\partial V = T - \sum_{i=1}^N T_i= T- \sum_{i=1}^N \gamma_{i\#}\bb{\mathbb{S}^1}.$$
            This completes the proof. 
        \end{proof}

        Clearly, a finite sum of injective Lipschitz loops induces an element of $C_1(X)$. By abuse of notation, we denote the induced singular $1$-chain also by $T$, whenever $T$ is an integral $1$-cycle given by a finite sum of injective Lipschitz loops. We want to define a homomorphism $H_1^\textup{IC}(X) \to H_1(X)$ by sending each class $[\eta] \in H_1^\textup{IC}(X)$ to a class in $H_1(X)$ induced by a representative of $[\eta]$ given by finitely many Lipschitz loops. But to do so, we first need to show that the induced class in $H_1(X)$ does not depend on the representation of $[\eta]$. Fortunately, this can easily be done by factoring the different representations through the Riemannian manifold $M$. 

        \begin{lemma}\label{lemma: induced-sing-homology-class-doesnt-depend-on-represent}
            Let $\beta_i,\gamma_j\colon \mathbb{S}^1\to X$ be finitely many Lipschitz loops such that the integral $1$-cycles
            $$T = \sum_i^N \beta_{i\#}\bb{\mathbb{S}^1} \quad \textup{and} \quad S= \sum_j^M \gamma_{j\#}\bb{\mathbb{S}^1}$$
            define the same homology class in $H_1^\textup{IC}(X)$. Then, the induced singular homology classes $[T],[S]\in H_1(X)$ are equal.
        \end{lemma}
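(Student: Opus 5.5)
The plan is to push everything forward to $M$ by a Lipschitz map close to $\varrho$, compare the classes there, and pull the conclusion back to $X$ using that $\varrho$ is a homeomorphism.

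Fix a Lipschitz map $\varphi\colon X \to M$ with $d(\varphi,\varrho)$ so small that $\varphi$ is homotopic to $\varrho$ (Lemma \ref{lemma: lip-approx-easy} and the discussion following it). Since $[T]=[S]$ in $H_1^\textup{IC}(X)$, there is $V\in\bI_2(X)$ with $\partial V = T-S$. Pushing forward gives $\partial(\varphi_\# V) = \varphi_\# T - \varphi_\# S$, so $\varphi_\#T$ and $\varphi_\#S$ are homologous in $H_1^\textup{IC}(M)$.

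Next observe that $\varphi_\#T=\sum_i(\varphi\circ\beta_i)_\#\bb{\mathbb{S}^1}$. This is the integral current induced by the singular Lipschitz $1$-cycle $\varphi_\# T$ in $C_1^\textup{Lip}(M)$, obtained by subdividing each loop $\varphi\circ\beta_i$ into Lipschitz $1$-simplices. The same holds for $\varphi_\#S$. By Theorem \ref{thme: sing-integral-homology-coincide}, applied to $M$, the inclusion $C_1^\textup{Lip}(M)\hookrightarrow\bI_1(M)$ induces an isomorphism on homology, and so does $C_1^\textup{Lip}(M)\hookrightarrow C_1(M)$. Equality of the classes in $H_1^\textup{IC}(M)$ therefore gives equality of the Lipschitz singular classes, and hence equality in $H_1(M)$: $\varphi_*[T]=[\varphi_\#T]=[\varphi_\#S]=\varphi_*[S]$.

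Finally, $\varphi$ is homotopic to $\varrho$, so $\varrho_*[T]=\varrho_*[S]$ in $H_1(M)$. Since $\varrho_*$ is an isomorphism, $[T]=[S]$ in $H_1(X)$.

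The main obstacle is the bookkeeping in the middle step. One must check that the singular chain induced by a Lipschitz loop, after subdivision into simplices, gives exactly the current $\gamma_\#\bb{\mathbb{S}^1}$; this holds because the current is additive under subdivision and reparametrization. One must also check that the loop's class does not depend on the chosen subdivision, which is standard in singular homology. Both are routine.
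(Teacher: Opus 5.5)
Your proof is correct and is essentially the paper's argument. You push forward by a Lipschitz approximation $\varphi$ of $\varrho$, observe that $\varphi_\#T$ and $\varphi_\#S$ are Lipschitz singular cycles homologous in $H_1^\textup{IC}(M)$ and hence in $H_1(M)$ by Theorem \ref{thme: sing-integral-homology-coincide}, and then pull back along $\varrho$. The only cosmetic difference is that you use $\varphi\simeq\varrho$ together with injectivity of $\varrho_*$, while the paper uses the equivalent fact $\varrho^{-1}\circ\varphi\simeq\mathrm{id}_X$.
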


        %Recall that the homology groups $H_1^\textup{IC}(M)$ and $H_1(X)$ are isomorphic; see Theorem \ref{thme: sing-integral-homology-coincide}. In fact, an even stronger statement is true. A singular chain $c$ is called a singular Lipschitz chain if each singular simplex of $c$ is a Lipschitz map. The boundary of a singular Lipschitz chain is again a Lipschitz chain. Thus, we can define the singular Lipschitz homology groups $H_k^\textup{Lip}(X)$ via these Lipschitz chains. Notice that every singular Lipschitz chain induces an integral current. It follows that the inclusions from the space of singular Lipschitz chains into the space of integral currents and singular chains, respectively, are chain maps. In the smooth manifold $M$, these inclusion maps induce isomorphisms on the level of homology $H_k^\textup{Lip}(M) \to H_k(M)$ and $H_k^\textup{Lip}(M) \to H_K^\textup{IC}(M) $; see \cite[Theorem 1.3]{Mitsuishi}.

        In the Riemannian manifold $M$, the inclusions $C_1^\textup{Lip}(M)\hookrightarrow \bI_1(M)$ and $C_1^\textup{Lip}(M)\hookrightarrow C_1(M)$, respectively, induce isomorphisms on the level of homology; see Theorem \ref{thme: sing-integral-homology-coincide}.

        \begin{proof}
            It follows from Lemma \ref{lemma: lip-approx-easy} that there exists a Lipschitz map $\varphi\colon X \to M$ such that the composition $\psi = \varrho^{-1}\circ \varphi$ is homotopic to the identity on $X$. Therefore, $[\psi_\#T] = [T]$ and $[\psi_\#S]=[S]$ as singular homology classes. Since $S$ and $T$ are homologous as integral currents, the integral $1$-cycles $\psi_\# S$ and $\psi_\# T $ represent the same homology class in $H_1^\textup{IC}(M)$. Notice that $\varphi_\# T$ and $\psi_\#S$ are Lipschitz chains. Hence, the singular homology classes of $\varphi_\# T$ and $\psi_\#S$, obtained by the isomorphism $H_1^\textup{IC}(M) \to H_1(M)$ and the classes induced by interpreting $\varphi_\# T$ and $\psi_\#S$ as singular chains, coincide. Therefore, as singular homology classes, 
            $$[T] = [\psi_\#T] = \varrho^{-1}_*[\varphi_\#T] =  \varrho^{-1}_*[\varphi_\#S]= [\psi_\#S]= [S]$$
            This completes the proof.
        \end{proof}

        We define a homomorphism 
        \begin{equation}\label{eq: definition-homomorph-1-dim-homology}
            \Theta\colon H_1^\textup{IC}(X) \to H_1(X), \; [\eta] \mapsto \left[\sum_{i=1}^N \gamma_{i\#}\bb{\mathbb{S}^1} \right],
        \end{equation}
        where $\sum_{i=1}^N \gamma_{i\#}\bb{\mathbb{S}^1}$ is any representation of $[\eta]$ by finitely many Lipschitz loops. By Lemma \ref{lemma: integral-homology-repres-finite-loops-}, there exists such a representation for each class $[\eta] \in H_1^\textup{IC}(X)$. Furthermore, Lemma \ref{lemma: induced-sing-homology-class-doesnt-depend-on-represent} implies that $\Theta$ is well-defined. In order to prove Theorem \ref{thme: one-homology}, we have to show that $\Theta$ is bijective.

        \medskip

        So far, we have used representations of homology classes in $H_1^\textup{IC}(X)$ by Lipschitz loops. However, one can simply pass from such a representation to a representation by Lipschitz curves $\gamma\colon [0,1]\to X$ and vice versa.  In the remainder of this section, we will use whichever representation is more suitable for the situation.

        \begin{lemma}\label{lemma: codimension-one-homology-surface-surjective}
            The homomorphism $\Theta$ is surjective.
        \end{lemma}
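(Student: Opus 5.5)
To prove surjectivity of $\Theta$, I will show that every singular homology class in $H_1(X)$ has a singular $1$-cycle representative built from Lipschitz curves, and that such a cycle can be reorganized into finitely many closed Lipschitz loops. The integral current induced by these loops is then a cycle in $\bI_1(X)$, and by the definition \eqref{eq: definition-homomorph-1-dim-homology} together with Lemma \ref{lemma: induced-sing-homology-class-doesnt-depend-on-represent}, $\Theta$ maps its class to the given singular class.

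Let $[c]\in H_1(X)$ and write $c=\sum_i n_i\sigma_i$ with $\sigma_i\colon\Delta^1\to X$ continuous. The first step is to replace $c$ by a homologous Lipschitz cycle. I see two routes.

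\emph{Via the manifold.} By Theorem \ref{thme: sing-integral-homology-coincide}, $H_1^\textup{Lip}(M)\to H_1(M)$ is an isomorphism. Hence $\varrho_\#c$ is homologous in $M$ to a Lipschitz cycle $c'$. I then need a way back to $X$, but $\varrho^{-1}$ is only continuous, so this transfer is not immediate.

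\emph{Directly in $X$ (preferred).} Since $X$ is compact, $\sigma_i$ is uniformly continuous. Subdivide each $\sigma_i$ into small subintervals, which changes $c$ only by a boundary (barycentric subdivision). Then replace each small piece $\sigma|_{[t_j,t_{j+1}]}$ by a geodesic segment between its endpoints; recall that we may assume $X$ is geodesic. To keep the class unchanged, the piece and the segment must be homotopic rel endpoints. This holds because $X$ is a topological manifold, hence locally contractible: if the piece and the segment both lie in a small ball that is contractible inside a larger set, the loop they form is null-homotopic. Since $X$ is compact, there is a uniform $\delta>0$ such that every loop of diameter less than $\delta$ is null-homotopic. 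Fixing the shared vertices, the difference between $c$ and the new chain is a sum of boundaries of singular $2$-simplices filling these small loops. The result is a Lipschitz $1$-cycle $c'$ with $[c']=[c]$.

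Next, a Lipschitz $1$-cycle $c'=\sum_k m_k\alpha_k$, with each $\alpha_k\colon[0,1]\to X$ Lipschitz and $bc'=0$, can be written, after repeating curves with multiplicity and reversing orientation (reversal changes the chain only by a boundary), as a finite union of closed concatenations. This is the standard argument for a graph in which every vertex has zero net degree: one decomposes it into finitely many Eulerian circuits. Each circuit, as a concatenation of Lipschitz curves, reparametrizes to a Lipschitz loop $\gamma\colon\mathbb{S}^1\to X$. As singular chains, the loop and the sum of its pieces differ by a boundary. The current $\sum\gamma_{i\#}\bb{\mathbb{S}^1}$ is then a cycle in $\bI_1(X)$, and its image under $\Theta$ equals $[c']=[c]$.

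The loops need not be injective, whereas \eqref{eq: definition-homomorph-1-dim-homology} was phrased with injective ones. Lemma \ref{lemma: induced-sing-homology-class-doesnt-depend-on-represent}, however, is stated for arbitrary Lipschitz loops, so this causes no problem.

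The main obstacle is the homotopy step in the second paragraph: showing that the small pieces and the geodesic segments are homotopic rel endpoints, uniformly over $X$. This uses local contractibility of manifolds together with compactness, via a Lebesgue-number-type argument, and has to be written carefully.
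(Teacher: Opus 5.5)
Your argument is correct and is essentially the paper's proof: you subdivide each singular simplex into pieces lying in small balls, replace each piece by a geodesic between its endpoints, and use that small balls are contractible in $X$ (you via a Lebesgue-number argument, the paper by fixing $\varepsilon$ directly) to conclude that the resulting Lipschitz cycle is homologous to $c$. Your Eulerian-circuit regrouping into closed Lipschitz loops spells out the passage from a cycle of Lipschitz curves to a finite sum of loops; the paper leaves this step implicit, relying on its remark before the lemma that one can pass freely between the two representations.
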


        \begin{proof}
            Let $\varepsilon>0$ be such that every ball in $X$ with radius less than $\varepsilon$ is contractible in $X$. Such an $\varepsilon$ exists since $X$ is homeomorphic to a closed, smooth manifold. Let $[\eta] \in H_1(X)$ be a singular homology class, and let $c$ be a singular $1$-chain that represents $[\eta]$. Then $c$ is of the form $c = \sum_i^N n_i \cdot \gamma_i$, where $n_i \in \Z$ and $\gamma_i\colon [0,1]\to X$ are continuous curves. Fix $i = 1,\dots,N$ for the moment. Let $0=t_1<\dots<t_k= 1$ be a partition of $[0,1]$ such that each $\gamma_i([t_j,t_{j+1}])$ is contained within an open ball of radius $\varepsilon/2$. We define a Lipschitz curve $\beta_i \colon [0,1] \to X$ that is a geodesic from $\gamma_i(t_j)$ to $\gamma_i(t_{j+1})$ for every $j = 1,\dots, k-1$. Each concatenation $-\beta_i|_{[t_j,t_{j+1}]} \cdot \gamma_i|_{[t_j,t_{j+1}]}$ is a closed curve and is contained within a ball of radius $\varepsilon$, and hence is contractible. Here, $-\beta_i|_{[t_j,t_{j+1}]} \cdot \gamma_i|_{[t_j,t_{j+1}]}$ denotes the curve that first follows $\gamma_i|_{[t_j,t_{j+1}]}$ and then $\beta_i|_{[t_j,t_{j+1}]}$ in the reverse direction. We conclude that there exists a singular $2$-chain $C$ with $bC = \gamma_i -\beta_i$. We repeat this for each $i = 1,\dots,N$ and define $T = \sum_{i=1}^N n_i \cdot \beta_{i\#}\bb{0,1}\in \bI_1(X)$. Clearly, $\partial T = 0$ and $c$ and $T$ are homologous as singular chains. Therefore, $\Theta\big([T]\big) = [c] =[\eta]$. We conclude that $\Theta$ is surjective.
        \end{proof}

        \begin{lemma}\label{lemma: codimension-one-homology-surface-injective}
            The homomorphism $\Theta$ is injective.
        \end{lemma}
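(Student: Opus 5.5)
Let $[\eta]\in H_1^\textup{IC}(X)$ with $\Theta([\eta])=0$. By Lemma \ref{lemma: integral-homology-repres-finite-loops-}, pick a representative $T=\sum_{i=1}^N \gamma_{i\#}\bb{\mathbb{S}^1}$ given by finitely many Lipschitz loops. Since $\Theta([\eta])=[T]=0$ in $H_1(X)$, there is a singular $2$-chain $c=\sum_{k} n_k\cdot\sigma_k$, with $\sigma_k\colon\Delta^2\to X$ continuous, such that $bc=T$ as singular chains. The plan is to replace $c$ by an integral $2$-current whose boundary is $T$, up to fillings of small loops supplied by the isoperimetric inequality.

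\emph{Subdivision step.} By uniform continuity of the finitely many $\sigma_k$, apply iterated barycentric subdivision to $c$ so that every $2$-simplex of the subdivided chain $c'$ has image of diameter less than $\delta$, where $\delta>0$ is small compared with $\Lambda$ (say $3c\delta<\Lambda$ with $c$ the quasiconvexity constant; after the reduction to geodesic $X$, $3\delta<\Lambda$). Subdivision is chain homotopic to the identity, so $bc'=\mathrm{Sd}^m(T)$, which is the same loops reparametrized as concatenations of short subarcs; as currents, $\mathrm{Sd}^m(T)$ and $T$ coincide, since pushforward of $\bb{0,1}$ is invariant under subdividing the interval. \emph{Lipschitz replacement.} Define a map on the vertices of the subdivision (points of $X$), and for each edge $e$ of $c'$ choose a Lipschitz curve: if $e$ lies in the boundary chain, i.e.\ is a subarc of some $\gamma_i$, use that subarc itself; otherwise use a geodesic between its endpoints, with consistent choices for edges shared by several simplices. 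For each $2$-simplex $\sigma$ of $c'$, the three chosen edge curves form a closed Lipschitz curve $\partial\tilde\sigma$ with $\bM<3\delta<\Lambda$ (boundary subarcs have small length if we also subdivide so that each arc of $\gamma_i$ has length $<\delta$, which we can arrange since $\gamma_i$ is Lipschitz). The isoperimetric inequality gives $V_\sigma\in\bI_2(X)$ with $\partial V_\sigma=\partial\tilde\sigma$ as currents.

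\emph{Assembly.} Set $V=\sum n_\sigma V_\sigma$. Then $\partial V=\sum n_\sigma\,\partial\tilde\sigma$, and since the edge curves were chosen consistently, interior edges cancel exactly as in $bc'$, leaving the current of the boundary edges, which is $\mathrm{Sd}^m(T)=T$ as a current. Hence $[\eta]=[T]=0$ in $H_1^\textup{IC}(X)$.

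The main obstacle is the bookkeeping in the assembly: ensuring the cancellation in $bc'$, which happens at the level of singular simplices, including degenerate ones and possibly cancelling pairs with different parametrizations, transfers to cancellation of the pushed-forward currents. I would handle it by making the edge-curve choice a function of the singular $1$-simplex only (geodesic depending on the ordered endpoint pair, or the original subarc), so that the map from singular $1$-chains to currents is a homomorphism compatible with $b$. Then $\sum n_\sigma\partial\tilde\sigma$ is the image of $bc'$. One also needs that a singular $1$-simplex appearing in $bc'$ that is a boundary subarc is mapped to itself; constant $1$-simplices map to zero currents.
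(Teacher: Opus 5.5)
Your proposal is correct and follows essentially the same route as the paper. Both arguments subdivide the bounding singular $2$-chain until all simplices are small, replace each edge by the corresponding subarc of some $\gamma_i$ or by a geodesic (chosen consistently), fill each resulting triangle cycle of mass less than $\Lambda$ via the small mass isoperimetric inequality, and sum the fillings with the signs of the chain. Making the edge replacement a function of the singular $1$-simplex alone, so that it extends to a homomorphism $C_1(X)\to\bI_1(X)$ compatible with boundaries, is a clean way to justify the cancellation that the paper handles by requiring $\beta_j^k=\beta_{j'}^{k'}$ whenever $\psi_j^k=\psi_{j'}^{k'}$.
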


        \begin{proof}
            Let $[\eta]\in H_1^\textup{IC}(X)$ be such that $\Theta\big([\eta]\big)= 0 \in H_1(X)$. Therefore, there exists a representation of $[\eta]$ by finitely many Lipschitz curves $\gamma_i\colon [0,1]\to X$ and a singular $2$-chain $c$ that bounds $T =\sum_{i=1}^N \gamma_{i\#}\bb{0,1}$, when $T$ is interpreted as a singular $1$-chain. We may suppose that $\textup{length}(\gamma_i)<\Lambda/3$ for each $i =1,\dots, N$. Write  $c = \sum_{j=1}^m \theta_j \psi_j$, where $\psi_j \colon \Delta^2 \to X$ are continuous maps and $\theta_j$ is equal to $1$ or $-1$. By passing to barycentric subdivision and subdividing the $\gamma_i$ accordingly, if necessary, we may assume that for all $x,y \in \Delta^2$ and every $j =1,\dots,m$, we have $d(\psi_j(x),\psi_j(y))< \Lambda/3$. For $j =1,\dots,m$ and $k=1,2,3$, let $\psi_j^k\colon [0,1]\to X$ be the restriction of $\psi_j$ to the $k$th boundary component of $\Delta^2$. Then 
            \begin{equation}\label{eq: codimension-one-homology-surface-injective}
                \partial c = \sum_{j=1}^m \sum_{k=1}^3 (-1)^k  \theta_j \psi_j^k = \sum_{i=1}^N \gamma_i.
            \end{equation}
            For each $j =1,\dots,m$ and every $k=1,2,3$, we define a Lipschitz curve $\beta_j^k\colon [0,1]\to X$ as follows. If $\psi_j^k$ is equal to one of the curves $\gamma_i$, we set $\beta_j^k = \gamma_i$. Otherwise, we let $\beta_j^k\colon[0,1]\to X $ be a geodesic from $\psi^k_j(0)$ to $\psi^k_j(1)$. We can do this in a consistent way such that $\beta_j^k = \beta_{j'}^{k'}$ whenever $\psi^k_j = \psi_{j'}^{k'}$. For  every $j = 1,\dots,m$, let $S_j = \sum_{k=1}^3 (-1)^k \beta_{j\#}^k\bb{0,1}\in \bI_1(X)$. It follows
            $$\bM(S_j) \leq \sum_{k=1}^3 \textup{length}(\beta_j^k) < \Lambda,$$
            for all $j = 1,\dots,m$. Moreover, by construction, each $S_j$ is a cycle. Therefore, the small mass isoperimetric inequality implies that for every $j = 1,\dots,m$ there exists a filling $V_j\in \bI_2(X)$ of $S_j$. We put $V = \sum_{j=1}^m \theta_j V_j$.  By \eqref{eq: codimension-one-homology-surface-injective}, we have
            $$\partial V = \sum_{j=1}^m\theta_j \partial V_j = \sum_{j=1}^m \theta_j S_j = \sum_{j=1}^m\sum_{k=1}^3 (-1)^k \theta_j \beta_{k\#}^j\bb{0,1} = \sum_{i=1}^N \gamma_{i\#}\bb{0,1} = T.$$
            Hence, $V$ is a filling of $T$ and in particular, $[T] = [\eta]$ is the trivial class in $H_1^\textup{IC}(X)$. This completes the proof.
        \end{proof}
        
        By combining the two previous lemmas, we conclude that the homomorphism $\Theta \colon H_1^\textup{IC}(X)\to H_1(X)$ defined in \eqref{eq: definition-homomorph-1-dim-homology} is an isomorphism. This proves Theorem \ref{thme: one-homology}. We conclude the section with the proof of the corollary.
    
       \begin{proof}[Proof of Corollary \ref{cor: one-dim-homology-surf}]
            Let $X$ be a metric space homeomorphic to a closed, oriented, connected smooth surface. Suppose that $X$ is Ahlfors $2$-regular and linearly locally contractible. It follows from Proposition \ref{prop: sobolev-isop-implies-current-isoper} and Theorem \ref{thme: small-mass-isop-ahlfors-llc} that $X$ satisfies a $1$-dimensional small mass isoperimetric inequality for integral currents. Therefore, by Theorem \ref{thme: one-homology} the homology group $H_1^\textup{IC}(X)$ is isomorphic to $H_1(X)$. Furthermore, by \cite[Theorem 1.3]{basso2023geometric} there exists $T \in \bI_2(X)$ that generates $H_2^\textup{IC}(X)$ and in particular, $H_2^\textup{IC}(X)$ and $H_2(X)$ are isomorphic. Recall that $X$ is quasiconvex; see Section \ref{see: isoper}. Using this is it not difficult to show that $H_0^\textup{IC}(X)$ and $H_0(X)$ are isomorphic as well. Finally, $X$ is Ahlfors $2$-regular and thus $\bI_k(X)=0$ for all $k>2$. We conclude that $H_k^\textup{IC}(X) = 0 = H_k(X)$ for all $k >2$. This completes the proof. 
        \end{proof}
    
\section{Examples}\label{sec: examples}
    We construct two geodesic metric spaces $X$ and $Y$ that are homeomorphic to the $2$-sphere $\mathbb{S}^2$ and have finite Hausdorff $2$-measure. While $X$ is Ahlfors $2$-regular but not linearly locally contractible, $Y$ is doubling and linearly locally contractible but not Ahlfors $2$-regular. However, the $1$-dimensional homology groups via integral currents of $X$ and $Y$ are not trivial. 
    Recall from the introduction that a metric space $E$ is linearly locally contractible if there exists $\lambda>0$ such that every ball $B(x,r)$ in $E$ with $r\in (0,\diam( E)/ \lambda)$ is contractible within $B(x,\lambda r)$. Furthermore, $E$ is said to be Ahlfors $2$-regular if there exists $c>0$ such that for every ball $B(x,r)$ in $E$ with $r\in (0,\diam (E))$ we have
    $$c^{-1} r^2 \leq \Ha^2(B(x,r)) \leq cr^2.$$
    
    %In particular, the homomorphism in Theorem \ref{thme: codimension-one-homology} cannot be an isomorphism, and we do need the strong assumptions in Corollary \ref{cor: one-dim-homology-surf}.
    By \cite[Theorem 1.3]{basso2023geometric}, both spaces $X$ and $Y$ have a metric fundamental class. Therefore, these examples show that the homology groups via integral currents can be larger than the singular homology groups. 

    \subsection{The first example}
        Let $(S_k)_{k\geq 1}$ be a family of disjoint circles in $\mathbb{S}^2$ of radius $1/2^{3k}$, arranged in a "line" such that $d(S_k,S_{k+1}) = 1/2^k$ for all $k \geq 1$. We denote by $D_k$ the closed disk bounded by $S_k$. For $k \geq 1$, we define the "mushroom" $M_k$ with the stem equal to the cylinder $S_k\times [0,1/2^{3k}]$ and the cap equal to the hemisphere of radius $2^{-k}$.

        \begin{figure}[ht]
              \centering
              \includegraphics[width=0.75\textwidth]{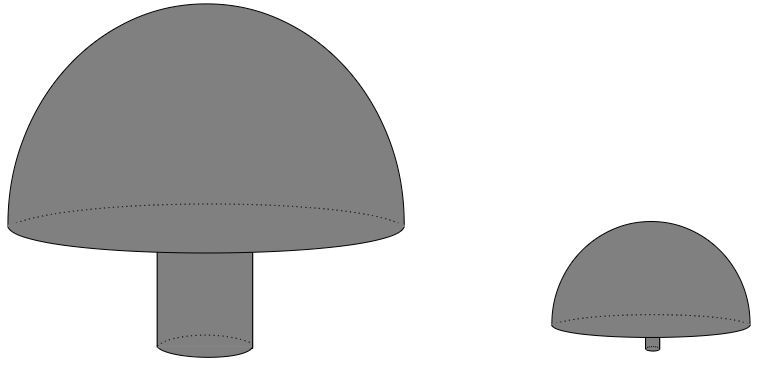}
              \caption{Two subsequent mushrooms $M_k$ and $M_{k+1}$}
        \end{figure}

        We obtain $X$ by replacing each disk $D_k$ with the mushroom $M_k$ and we equip $X$ with the intrinsic metric $d_{\textup{int}}$. Then
        $$\Ha^2(X) \leq \Ha^2(\mathbb{S}^2) + \sum_{k=1}^\infty \Ha^2(M_k) = 4\pi + 2\pi\sum_{k=1}^\infty  \frac{1}{2^{4k}} +\frac{1}{2^{2k}} < 5\pi.$$
        Therefore, $X$ is a geodesic metric space with finite Hausdorff $2$-measure that is homeomorphic to $\mathbb{S}^2$. We claim that $X$ is Ahlfors $2$-regular. The lower bound is easily verified. For the moment, fix $x \in X$ and $r\in (0,1)$. Let $j, N \geq1$ be such that $M_j$ is the closest mushroom to $x$ and $\sum_{k = N-1}^\infty 1/2^{k} \leq r \leq \sum_{k = N}^\infty 1/2^{k}$. In case $N \geq j+2$, then $B(x,r)$ does not intersect any other mushroom than $M_j$ and the upper bound $\Ha^2(B(x,r)) \preceq r^2$ is immediate. Otherwise, if $N < j+2$, then $B(x,r)$ intersects at most the $M_k$ for $k \geq N-1$, and hence,
        \begin{align*}
            \Ha^2(B(x,r)) &\leq cr^2 + \sum_{k=N-1}^\infty \Ha^2(M_k) \leq cr^2  + \sum_{k=N-1}^\infty \left(\frac{1}{2^k}\right)^2
            \\
            &\leq cr^2+ 4\pi\left(\sum_{k=N-1}^\infty \frac{1}{2^k}\right)^2 \leq (c + 4\pi) r^2, 
        \end{align*}
        %$$\Ha^2(B(x,r)) \leq Cr^2 + \sum_{k=N-1}^\infty \Ha^2(M_k) \leq Cr^2+ 4\pi\left(\sum_{k=N-1}^\infty \frac{1}{2^k}\right)^2 \leq (C + 4\pi) r^2,$$
        %\leq C r^2 + 4\pi \sum_{i=N-1}^\infty 2^{-2i} 
        where $c>0$ is the Ahlfors $2$-regularity constant of the standard $2$-sphere. A ball centered at some $S_k$ that wraps around the stem of $M_k$ is only contractible within a ball that contains the whole mushroom $M_k$. The ratio between the circumference of the stem and the height of $M_k$ is proportional to $1+2^{2k}$. We conclude that $X$ is not linearly locally contractible. Finally, for $k \geq 1$, we define $T = \sum_{k=1}^\infty 2^{2k}\bb{S_i} \in \bI_1(X)$. Then, $T$ is a cycle and
        $$\bM(T) \leq \sum_{k=1}^\infty  2^{2k}\bM(\bb{S_k})= 2\pi \sum_{k=1}^\infty \frac{1}{2^{k}} < \infty.$$
        Notice that each $\bb{S_k}$ has only two fillings, $\bb{M_k}$ and $\bb{M^c_k}$. Out of these two possibilities, $\bb{M_k}$ has smaller mass.
        However, 
        $$\sum_{k=1}^\infty 2^{2k} \bM(\bb{M_k}) = 2\pi \sum_{k=1}^\infty 2^{2k} \left(\frac{1}{2^{4k}} +\frac{1}{2^{2k}}\right) = \infty. $$
        It follows that $T$ has no filling with finite mass. In particular, $[T]\neq 0 $ and $H_1^\textup{IC}(X)$ is not trivial.

    \subsection{The second example}
        We construct the next example $Y$ out of the surface $Q = \partial[0,1]^3$ of the unit cube in $\R^3$. As before, we replace small pieces in $Q$ with more complicated sets to obtain $Y$. We begin with the definition of those sets. Divide the unit square $[0,1]^2$ into $9$ squares of side-length $1/3$ and replace the middle square with the surface of the cube $[0,1/3]^3$, from which we have removed the bottom square. The resulting surface $G$ is called the generator and consists of $13$ squares of side-length $1/3$. Set $S_0 = G$. We inductively define the surfaces $S_k$ by replacing each of the $13^k$ squares in $S_{k-1}$ with a copy of the generator scaled by a factor of $1/3^k$.

         \begin{figure}[ht]
              \centering
              \includegraphics[width=1\textwidth]{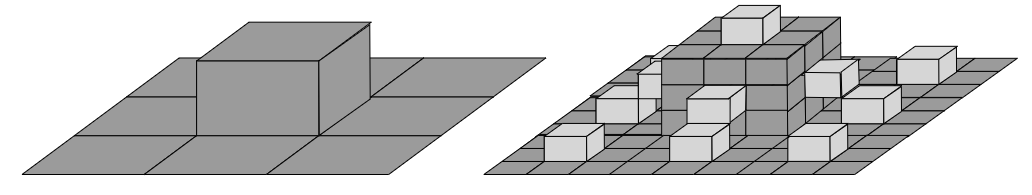}
              \caption{The generator $G$ and the first iteration $S_1$.}
        \end{figure}

         Notice that the $S_k$ have self-intersections. To avoid this, we carefully round off the sides of each new cube added in $S_k$. We equip each $S_k$ with the intrinsic length metric $d_{\textup{int}}$. It is not difficult to show that the sequence $(S_k)_{k\geq 0}$ converges in the Hausdorff distance. The limit $\mathcal{S}$ is one side of the so-called snowsphere. The snowsphere is a $2$-dimensional analogue of the snowflake curve and has been extensively studied in \cite{meyer-snowsphere-1, meyer-snowsphere-2}. We need the following facts. 
                \begin{enumerate}
                    \item The $S_k$ are uniformly doubling; 
                    %(\cite[proof of Theorem 4.2.]{meyer-snowsphere-1});
                    \item $\Ha^2(S_k) = \left(13/9\right)^k$ for every $k \in \N$ and in particular, $\Ha^2(S_k) \to \infty$ as $k \to \infty$;
                    \item $d_{\textup{int}}(x,y) \leq 4$ for all $x,y\in S_k$ and $k\in \N$;
                    \item the $S_k$ are uniformly linearly locally contractible.
                \end{enumerate}
                The first three properties are verified easily and we refer to \cite{meyer-snowsphere-1} for details. We explain why the last property holds. By \cite[Theorem 5.1]{meyer-snowsphere-1}, the limit $\mathcal{S}$ of the $S_k$ is homeomorphic to the unit square via a quasisymmetry $f\colon \mathcal{S}\to [0,1]^2$. That is, there exists a homeomorphism $\eta\colon [0,\infty) \to [0,\infty)$ such that 
                \begin{equation}\label{eq: def-quasisymmetry}
                    d_\mathcal{S}(x,y)\leq t d_\mathcal{S}(x,z)\quad \implies\quad \norm{f(x)-f(y)} \leq \eta(t) \norm{f(x)-f(z)}
                \end{equation}
                for all $x,y,z \in \mathcal{S}$ and $t\geq 0$. It follows from the construction of $f$ that the $S_k$ are uniformly quasisymmetric to the unit square. Here, uniformly means that there exists one homeomorphism $\mu\colon [0,\infty) \to [0,\infty)$ such that each quasisymmetry $f_k \colon S_k \to [0,1]^2$ satisfies \eqref{eq: def-quasisymmetry} with respect to that homeomorphism $\mu$. Furthermore, it is a direct consequence of the definitions that a compact metric space $X$ which is quasisymmetric to a linearly locally contractible metric space is itself linearly locally contractible (quantitatively). This yields property (4). 
                %Alternatively, one can use the following fact to prove (d). The four corners of the generator $G$ can be connected to a corner of the surface attached in the construction of $G$ by a curve with length at most $\frac{4}{3}$.
                Let $Q$ be the surface of the unit cube $[0,1]^3$ and let $(Q_k)_{k\in \N}$ be a family of disjoint squares of side-length $\left(3/4\right)^k$. We obtain $Y$ by replacing each square $Q_k$ with $S_k$ (scaled appropriately). Then, $Y$ is a topological $2$-sphere and has finite Hausdorff $2$-measure 
                $$\Ha^2(Y) \leq \Ha^2(Q) + \sum_{k=1}^\infty \left(\frac{3}{4}\right)^{2k} \Ha^2(S_k) = 6 +\sum_{k=1}^\infty \left(\frac{13}{16}\right)^k <\infty.$$ 
                Combining properties (2) and (3) implies that $Y$ is not Ahlfors $2$-regular. Since the doubling property and linear local contractibility are invariant under scaling, we conclude that $Y$ is doubling and linearly locally contractible. Finally, let $T = \sum_{k=1}^\infty \left(\frac{13}{10}\right)^k \partial\bb{S_k'}$, where the $S_k'$ denote the scaled copies of the $S_k$ we attached to $Y$. Then, $T$ is an integral $1$-cycle with
                $$\bM(T) \leq  \sum_{k=1}^\infty \left(\frac{13}{10}\right)^k \bM(\partial\bb{S_k'}) = 4 \sum_{k=1}^\infty \left(\frac{39}{40}\right)^k <\infty. $$
                For each $k \geq 1$, the integral $1$-cycle $\partial\bb{S_k'}$ has exactly two fillings, $\bb{S_k'}$ and $\bb{(S_k')^c}$. We have $\bM(\bb{S_k'} )\leq \bM(\bb{(S_k')^c})$ for each $k \geq 1$.
                However,
                $$\sum_{k=1}^\infty \left(\frac{13}{10}\right)^k \bM(\bb{S_k'}) = \sum_{k=1}^\infty \left(\frac{169}{160}\right)^k= \infty.$$
                Therefore, $T$ has no filling with finite mass. We conclude that $[T]\neq 0 $ and $H_1^\textup{IC}(X)$ is not trivial.

\bibliographystyle{plain}

\end{document}